 \newcommand{\resp}{{\it resp.} }
\newcommand{\cf}{{\it cf.} }
\newcommand{\ie}{{\it i.e.} }
\newcommand{\eg}{{\it e.g.} }
\newcommand{\loccit}{{\it loc. cit.} }
\newcommand{\Q}{\mathbb{Q}}
\newcommand{\R}{\mathbb{R}}
\newcommand{\C}{\mathbb{C}}
  \newcommand{\Z}{\mathbb{Z}}
  \newcommand{\sA}{{\mathcal{A}}}
\newcommand{\sD}{{\mathcal{D}}}
\newcommand{\sH}{{\mathcal{H}}}
\newcommand{\sL}{{\mathcal{L}}}
\newcommand{\sF}{{\mathcal{F}}}
\newcommand{\sM}{{\mathcal{M}}}
\newcommand{\sO}{{\mathcal{O}}}
\newcommand{\sV}{{\mathcal{V}}}
\newcommand{\sW}{{\mathcal{W}}}
\newcommand{\sX}{{\mathcal{X}}}
\newcommand{\inj}{\hookrightarrow}
\newcommand{\surj}{\rightarrow\!\!\!\!\!\rightarrow}
\newcommand{\Spec}{\operatorname{Spec}}
\newcounter{spec}
\newtheorem{thm}{Theorem}[subsection]
\newtheorem{lemma}[thm]{Lemma}
\newtheorem{sco}[thm]{Scholium}
\newtheorem{por}[thm]{Porism}
\theoremstyle{definition}
\newtheorem{defn}[thm]{Definition}
\newtheorem{rem}[thm]{Remark}
\newtheorem{rems}[thm]{Remarks}
\numberwithin{equation}{section}
\renewcommand{\qed}{\hfill $\square$\medskip}
\begin{document}

\title[On the Kodaira-Spencer map of abelian schemes.]{On the Kodaira-Spencer map of abelian schemes }
 
\author{Y. Andr\'e}
 
  \address{Institut de Math\'ematiques de Jussieu, 4 place Jussieu, 75005 Paris France.}
\email{yves.andre@imj-prg.fr}
%  \date{\today}
\keywords{abelian scheme, Kodaira-Spencer map, monodromy, Shimura variety, automorphic vector bundle}
 \subjclass{11G, 14K, 14M}
  \begin{abstract}   Let $A$ be an abelian scheme over a smooth affine complex variety $S$, $\varOmega_A$ the $\sO_S$-module of $1$-forms of the first kind on $A$, $\sD_S\varOmega_A$ the $\sD_S$-module spanned by $\varOmega_A$ in the first algebraic De Rham cohomology module, and $\theta_\partial: \varOmega_A \to  \sD_S\varOmega_A/\varOmega_A$ the Kodaira-Spencer map attached to a tangent vector field $\partial$ on $S$. We compare the rank of $\sD_S\varOmega_A/\varOmega_A$ to the maximal rank of $\theta_\partial$ when $\partial$ varies: we show that both ranks do not change when one passes to the ``modular case", \ie when one replaces $S$ by the smallest weakly special subvariety of $\sA_g$ containing the image of $S$ (assuming, as one may up to isogeny, that $A/S$ is principally polarized); we then analyse the ``modular case" and deduce, for instance, that {\it for any abelian pencil of relative dimension $g$ with Zariski-dense monodromy in $Sp_{2g}$}, {\it the derivative with respect to a parameter of a non zero abelian integral of the first kind is never of the first kind}. 
  \end{abstract}
\maketitle

 \begin{sloppypar}

     \medskip 
  This paper deals with abelian integrals depending algebraically on parameters and their derivatives with respect to the parameters. Since the nineteenth century, it has been known that differentiation with respect to parameters does not preserve abelian integrals of the first kind in general. 
 
 We study this phenomenon in the language of modern algebraic geometry, i.e. in terms of the algebraic De Rham cohomology $\sO_S$-module $\sH^1_{dR}(A/S)$ attached to an abelian scheme $A$ of relative dimension $g$ over a smooth $\C$-scheme $S$, its submodule $\varOmega_A$ of forms of the first kind on $A$, the Gauss-Manin connection $\nabla$ and the associated Kodaira-Spencer map $\theta $, \ie 
 the $\sO_S$-linear map $\,T_S \otimes \varOmega_A \stackrel{\theta}{\to} \sH^1_{dR}(A/S)/\varOmega_A\,$ induced by $\nabla$.
 
 We introduce and compare the following (generic) ``ranks":
 
  \smallskip $r = r(A/S) = {\rm rk}  \, \sD_S \varOmega_A/\varOmega_A,\; $
 
  \smallskip  $ r' = r'(A/S)  = {\rm rk}\,  \theta ,\; $
 
  \smallskip  $\displaystyle r'' = r''(A/S)  = \max_\partial  {\rm rk}\, \theta_\partial, \;$ 
 
    where $\partial$ runs over local tangent vector fields on $S$ (of course, $r''= r'$ when $S$ is a curve).
 
 \smallskip One has $r''\leq r'\leq r\leq g$ and these inequalities may be strict, even if there is no isotrivial factor (\ref{exe}). On the other hand, these ranks are insensitive to dominant base change, and depend only on the isogeny class of $A/S$  (\ref{l2}).  In particular, one may assume that $A/S$ is principally polarized and (replacing $S$ by an etale covering) admits a level $n\geq 3$ structure.

\medskip We prove that {\it $r$ and $r''$ are unchanged if one passes to the ``modular case"}, \ie if one replaces $S$ by the smallest weakly special (= totally geodesic) subvariety of the moduli space $\sA_{g,n}$ containing the image of $S$, and $A$ by the universal abelian scheme on $S$ (\ref{t5}). 

We prove that {\it  $r = r'$ in the ``modular case"}, \ie when $S$ is a weakly special subvariety of $\sA_{g,n}$ (\ref{t6}).  

\medskip  We then study the ``PEM case", \ie the case when the connected algebraic monodromy group is maximal with respect to the polarization and the endomorphisms, and emphasize the ``restricted PEM case", \ie where we moreover assume that if the center $F $ of ${\rm End}\, A \otimes \Q$ is a CM field, then $\varOmega_A$ is a free $F\otimes_\Q \sO_S$-module (\ref{pe}, \ref{rpe}); this includes, of course, the case when the algebraic monodromy group is $Sp_{2g}$.

Building on the previous results, we show that one has {\it $r'' = r' = r= g$ in the restricted PEM case} (\ref{t7}). 
 If moreover $S$ is a curve, we show that {\it the derivative (with respect to a parameter) of a non zero abelian integral of the first kind is never of the first kind} (\ref{t8}). 

  \medskip  Our methods are inspired by B. Moonen's paper \cite{Mo}; we exploit the ``bi-algebraic" properties of the Kodaira-Spencer map in the guise of a theorem of ``logarithmic Ax-Schanuel type" for tangent vector bundles (\ref{t4}). 
  
 \medskip Since the problems under study occur in various parts of algebraic geometry and diophantine geometry, we have tried to make the results more accessible by including extended reminders: section 1 about algebraic De Rham cohomology of abelian schemes, Gauss-Manin connections and Kodaira-Spencer maps; subsections 3.1 to 3.4 about weakly special subvarieties of connected Shimura varieties, relative period torsors, and automorphic bundles.

 \section{Preliminaries}

 \subsection{Invariant differential forms}  Let $S$ be a smooth connected scheme over a field $k$ of characteristic zero. 
 
Let $f : G {\to} S$ be a smooth commutative group scheme; we denote by $m : G\times_S G \to G$ the group law and by $e : S \to G$ the unit section. The {\it invariant differential $1$-forms} on $G$ are those satisfying $m^\ast \omega = p_1^\ast \omega + p_2^\ast \omega$ (where $p_1, p_2$ denote the projections); they form a locally free $\sO_S$-module denoted by $\varOmega_G$, naturally isomorphic to $e^\ast \Omega^1_{G/S}$  and to $f_{ \ast} \Omega^1_{G/S}$, and $\sO_S$-dual to the Lie algebra ${\rm Lie}\, G$. One has $f^\ast  \varOmega_G \cong  \Omega^1_{G/S}$. Moreover, invariant differential $1$-forms are closed \cite[3.5]{MG}\cite[1.2.1]{C}.

Let us consider the special case when $G=A$ is an abelian scheme of relative dimension $g$, or $G=A^\natural$ universal vectorial extension of $A$ (Rosenlicht-Barsotti, \cf \eg \cite[I]{MM}). Recall that $Ext(A, \mathbb G_a) \cong  R^1f_\ast \sO_A$ (using the fact that any rigidified $\mathbb G_a$-torsor over an $S$-abelian scheme has a canonical $S$-group structure), so that $A^\natural$ is an extension of $A$ by the vector group attached to the dual of  $R^1f_\ast \sO_A$, which is a locally free $\sO_S$-module of rank $g$. The projection $A^\natural\to A$ gives rise to an exact sequence of locally free $\sO_S$-modules
\begin{equation}\label{eq0} 0 \to \varOmega_A \to \varOmega_{A^\natural} \to  R^1f_\ast \sO_A \to 0,\end{equation}
  in a way compatible with base change $S'\to S$.
 On the other hand, if $A^t := Pic^0(A)$ denotes the dual abelian scheme,  $\varOmega_{A^{t}}$ is naturally dual to $R^1f_\ast \sO_A$ (Cartier), and $\varOmega_{A^{t \natural}}$ is naturally dual to $\varOmega_{A^\natural}$ in such a way that the exact sequence \eqref{eq0} is dual to corresponding exact sequence for $A^t$ \cite[1.1.1]{C}.

  \subsection{Algebraic De Rham cohomology}  The {\it first algebraic De Rham cohomology} $\sO_S$-module $  \sH^1_{dR}(G/S)$ is the hypercohomology sheaf ${\bf R}^1f_\ast(\Omega^\ast_{G/S}, d)$. Assuming $S$ affine, it can be computed \`a la \v Cech using an affine open cover $\mathcal U$ of $G$ and taking as coboundary map on $C^p(\mathcal U, \Omega^q_{G/S})$ the sum of the \v Cech coboundary and $(-)^{p+1}$ times the exterior derivative $d$. In particular, since invariant differential forms are closed, there is a canonical $\sO_S$-linear map 
   $\varOmega_G \to  \sH^1_{dR}(G/S)$. 
  
 In case $G= A $ is an abelian scheme, and $A^\natural$ its universal vectorial extension, it turns out that the canonical morphisms $$\varOmega_{A^{\natural}} \to  \sH^1_{dR}(A^{\natural}/S) \leftarrow  \sH^1_{dR}(A/S) $$ are isomorphisms \cite[1.2.2]{C}. The exact sequence 
 \eqref{eq0} thus gives rise to an exact sequence of locally free $\sO_S$-modules
 \begin{equation}\label{eq1} 0 \to \varOmega_A = f_\ast \Omega^1_{A/S} \to  \sH^1_{dR}(A/S) \to  R^1f_\ast \sO_A = \varOmega^\vee_{A^t} \to 0, \end{equation} in a way compatible with base change $S'\to S$ and with duality $A\mapsto A^t$ (\cf also \cite[8.0]{K1}; $  f_\ast \Omega^1_{A/S}$ and  $R^1f_\ast \sO_A $ are the graded pieces $gr^1$ and $gr^0$ of the Hodge filtration of $\sH^1_{dR}(A/S)$ respectively). 
 
 Any polarization of $A$ endows the rank $2g$ vector bundle $\sH^1_{dR}(A/S)$ with a symplectic form, for which $\varOmega_A$ is a lagrangian\footnote{\ie isotropic of rank $g$.} subbundle, and the exact sequence \eqref{eq1} becomes autodual. 
 
 \smallskip When $S = \Spec k$,  $\sH^1_{dR}(A/S)$ can also be interpreted as the space of differential of the second kind (\ie closed rational $1$-forms which are Zariski-locally sums of a regular $1$-form and an exact rational form) modulo exact rational $1$-forms. For any rational section $\tau$ of $A^\natural \to A$ and any $\eta\in  \varOmega_{A^{\natural}}$, $\tau^\ast\eta$ is of the second kind and depends on $\tau$ only up to the addition of an exact rational $1$-form. 
 
 \medskip  In the sequel, we abbreviate $\sH^1_{dR}(A/S)$ by $\sH$. 
  
 \subsection{Gauss-Manin connection}\label{GM}  Since the nineteenth century, it has been known that differentiating abelian integrals with respect to parameters leads to linear differential equations, the prototype being the Gauss hypergeometric equation in the variable $t$ satisfied by $\int_1^\infty z^{a-c}(1-z)^{c-b-1} (1-tz)^{-a}dz$. Manin gave an algebraic construction of this differential module (in terms of differentials of the second kind), later generalized by Katz-Oda and others to the construction of the Gauss-Manin connection on algebraic De Rham cohomology of any smooth morphism $X\to S$. 
 
 \smallskip Let as before $A\stackrel{f}{\to} S$ be an abelian scheme of relative dimension $g$ over a smooth connected $k$-scheme $S$. If $k=\mathbb C$, the {\it Gauss-Manin connection} is determined by its analytification $\nabla^{an}$, whose dual is the unique analytic connection on $(\sH^\vee)^{an}$ which kills the period lattice 
   \begin{equation}\label{eq6}\ker \exp_{A} \cong \ker \exp_{A^\natural} \subset ({\rm Lie}\, A^{\natural})^{an} =  (\varOmega_{A^{\natural}}^\vee)^{an} = (\sH^\vee)^{an} .\end{equation}
   
   The formation of $(\sH, \nabla)$ is compatible with base change $S'\to S$ and with duality $A\mapsto A^t$. It is contravariant in $A$, and $S$-isogenies lead to isomorphisms between Gauss-Manin connections.

\smallskip  If $S$ is affine and $\varOmega_A$ and $\varOmega_{A^t}$ are free, let us take a basis $\omega_1, \ldots, \omega_g$ of  $\varOmega_A$ and complete it into a basis $\omega_1, \ldots, \omega_g, \eta_1, \ldots, \eta_g$ of  $\sH$. Pairing with a basis $\gamma_1, \ldots, \gamma_{2g}$ of the period lattice on a universal covering $\tilde S$ of $S^{an}$, one gets a full solution matrix  
   \begin{equation}\label{eq7} Y = \begin{pmatrix}\Omega_1 & {\rm N}_1\\ \Omega_2 & {\rm N}_2 \end{pmatrix}\in  M_{2g}(\sO(\tilde S)) \end{equation} for $\nabla$ (with $(\Omega_1)_{ij} = \int_{\gamma_i} \omega_j$, etc...). This reflects into a family of differential equations\footnote{we write the matrix of $\nabla_\partial$ on the right in order to let the monodromy act on the left on $Y$. This convention has many advantages.%, \cf for instance the parallel with \ref{End}.
    In particular, it is independent of the choice of $\gamma_1, \ldots, \gamma_{2g}$.} 
     \begin{equation}\label{eq8} \partial Y =   Y \begin{pmatrix}R_\partial & S_\partial\\ T_\partial & U_\partial \end{pmatrix}, \end{equation}  where $R_\partial , S_\partial, T_\partial , U_\partial  \in  M_{g}(\sO(S))$\footnote{the fact that these matrices have entries in $\sO(S)$ rather than $\sO(S^{an})$ reflects the algebraic nature of the Gauss-Manin connection. Alternatively, it can be deduced from the next sentence.}  depend $\sO(S)$-linearly on the derivation $\partial \in \Gamma T_S.$  
      
     It is well-known that the Gauss-Manin connection is regular at infinity (\cf \eg \cite[14.1]{K1}), hence its $\sD$-module theoretic properties are faithfully reflected by monodromy theoretic properties. 
     
\rem  The Katz-Oda algebraic construction of $\nabla$, in the case of $\sH^1_{dR}(A/S)$, goes as follows \cite[1.4]{K2}. From the exact sequence
 \begin{equation}\label{eq2} 0 \to f^\ast \Omega^1_{S/k}  \to \Omega^1_{A/k}  \to   \Omega^1_{A/S} \to 0,\end{equation}
 passing to exterior powers, one gets the exact sequence of $k$-linear complexes of $\sO_A$-modules 
  \begin{equation}\label{eq3} 0 \to f^\ast \Omega^1_{S/k}\otimes \Omega^{\ast-1}_{A/S}  \to \Omega^\ast_{A/k}/( f^\ast \Omega^2_{S/k}\otimes \Omega^{\ast-2}_{A/S})  \to   \Omega^\ast_{A/S} \to 0.\end{equation}
  Then $\nabla$ is a coboundary map in the long exact sequence for ${\bf R}^\ast f_\ast$ applied to \eqref{eq3}, that is
   \begin{equation}\label{eq4}   {\bf R}^1 f_\ast \Omega^\ast_{A/S} \stackrel{\nabla}{\to}  {\bf R}^2 f_\ast (f^\ast \Omega^1_{S/k}\otimes \Omega^{\ast-1}_{A/S}) =  \Omega^1_{S/k}\otimes {\bf R}^1 f_\ast \Omega^\ast_{A/S},\end{equation} and can be computed explicitly \`a la \v Cech, \cf \cite[3.4]{K1}.
  One checks that this map satisfies the Leibniz rule and the associated map
   \begin{equation}\label{eq5} T_{S} = (\Omega^1_{S/k})^\vee \stackrel{\partial \mapsto \nabla_\partial}{\to} End_k  \sH \end{equation} 
 respects Lie brackets, so that $\nabla$ corresponds to a $\sD_S$-module structure on $\sH$   (here $\sD_S$ denotes the sheaf of rings of differential operators on $S$, which is generated by the tangent bundle $T_{S }$).  In fact, it can also be interpreted as the first higher direct image of $ \sO_A $ in the $\sD$-module setting (\cf \eg\cite[4]{CaF} for an algebraic proof). 
  
 \smallskip An alternative and more precise construction of $\nabla$, which avoids homological algebra, consists in endowing $A^{\natural}$ with the structure of a commutative algebraic $\sD$-group, which automatically provides a connection on (the dual of) its Lie algebra \cite[3.4, H5]{BP}\cite[6]{B}.

     \subsection{Kodaira-Spencer map}  
     The Gauss-Manin connection does not preserves the subbundle $\varOmega_A\subset \sH$ in general. The composed map
      \begin{equation}\label{eq9} \varOmega_A\inj \sH \stackrel{\nabla}{\to} \Omega^1_S\otimes \sH \surj   \Omega^1_S\otimes  (\sH/  \varOmega_{A }) =  \Omega^1_S\otimes    \varOmega^\vee_{A^t} \end{equation} is the {\it Kodaira-Spencer map} (or Higgs field). Like the Gauss-Manin connection, its formation commutes with base-change. Unlike the Gauss-Manin connection, it is an $\sO_S$-linear map (also called the Higgs field of $A/S$ \cite{VZ}).    
      
      \rem  This map can be interpreted as a coboundary map in the long exact sequence for ${ R}^\ast f_\ast$ applied to \eqref{eq2}, and computed explicitly \`a la \v Cech, \cf \cite[3.4]{K1}\cite[1.3]{K2}.

 \smallskip  It can be rewritten as the map
         \begin{equation}\label{eq10}  \theta: \; T_S \otimes_{\sO_S}   \varOmega_{A } \;  {\to} \;\varOmega^\vee_{A^t } = {\rm Lie}\, A^t. \end{equation}
   If $\sD_S^{\leq 1}\subset \sD_S$ denotes the subsheaf of differential operators of order $\leq 1$ on $S$,  and $\sD_S\varOmega_A\subset \sH$ the sub-$\sD_S$-module generated by $\varOmega_A$ in $\sH = \sH^1_{dR}(A/S)$. one has 
 \begin{equation}\label{eq10'} {\rm Im}\, \theta = \sD_S^{\leq 1}  \varOmega_{A }/  \varOmega_{A } \subset   \sD_{S }  \varOmega_{A }/  \varOmega_{A } \subset \sH/    \varOmega_{A } = {\rm Lie}\, A^t. \end{equation}
   
  \smallskip The Kodaira-Spencer map can also be rewritten as the map
     \begin{equation}\label{eq11}  T_S \;\stackrel{\partial \mapsto \theta_\partial}{\to} \; {\rm Lie}\, A \otimes {\rm Lie}\, A^t, \end{equation}
    which is invariant by duality $A\mapsto A^t$ \cite[9.1]{CF}; if $A$ is polarized, it thus gives rise to a map  
       \begin{equation}\label{eq12}  T_S \; \stackrel{\partial \mapsto \theta_\partial}{\to}  \; S^2 {\rm Lie}\, A \cong {\rm Hom_{sym}}(  \varOmega_{A } ,   \varOmega_{A } ^\vee) . \end{equation}
          In the situation and notation of the end of \S  \ref{GM}, {\it the matrix of $\theta_\partial$ is $T_\partial$ (which is a symmetric matrix if one chooses the basis  $\omega_1,\ldots , \eta_g$ to be symplectic)}.

 \begin{rems}  $i)$  Here is another interpretation of $\theta_\partial$ in terms of the universal vectorial extension $A^\natural$, assuming $S$ affine  \cite[9]{CF}: for any $\omega\in \Gamma\varOmega_A$, pull-back the exact sequence of vector bundles associated to \eqref{eq2} by the morphism $\sO_A \to \Omega^1_{A/S}$ corresponding to $\omega$ and get an extension of $A$ by the vector group attached to $\Omega^1_S$, so that the morphism from $A^\natural$ to this vectorial extension gives rise, at the level of invariant differential forms, to a morphism $\varOmega_{A^t} \to \Omega^1_S$; thus to any $\omega$ and any $\partial \in \Gamma T_S = {\rm Hom}(\Omega^1_S, \sO_S)$, one gets an element of $ \varOmega^\vee_{A^t} $, which is nothing but $\theta_\partial\cdot \omega\,$.
  
  \smallskip $ii)$ The following equivalences are well-known: 
      
       $A/S$   is isotrivial $\Leftrightarrow  \theta = 0  \Leftrightarrow\sD_S\varOmega_A= \varOmega_A  \Leftrightarrow \nabla$ is isotrivial (\ie has finite monodromy). 
       
    Remembering that the Kodaira-Spencer map commutes to base-change, the only non trivial implications are:  
   $   \nabla$ isotrivial $\Rightarrow  \sD_S\varOmega_A= \varOmega_A$, and  $ \,\theta = 0 \Rightarrow A/S$ isotrivial. 
    The first implication comes from Deligne's ``th\'eor\`eme de la partie fixe" \cite[4.1.2]{D}. An elementary proof of the second one will be given below (\ref{bial}).   
    
    \smallskip $iii)$ In contrast to $\sD_S\varOmega_A$, $\sD^{\leq 1}_S\varOmega_A$ is not locally a direct factor of $\sH$ in general: at some points $s\in S$ the rank of $\theta_s$ may drop (see however th. \ref{t6}). In fact, the condition that the rank of $\theta_s$ is constant is very restrictive: for instance, if $S$ is a proper curve, the condition that $\theta$ is everywhere an isomorphism is equivalent to the condition that the Arakelov inequality ${\rm deg}\,\varOmega_{A } \leq \frac{g}{2} {\rm deg}\,\Omega^1_S$ is an equality, and implies that $A/S$ is a modular family, parametrized by a Shimura curve \cite{VZ}. 
    \end{rems}

    \subsubsection{}  Since the $\sO_S$-module $\sH/\sD_S\varOmega_A$ carries a $\sD_S$-module structure, it is locally free \cite[8.8]{K1}, hence $\sD_S\varOmega_A$ is locally a direct summand of $\sH$. In fact, by Deligne's semisimplicity theorem \cite[4.2.6]{D}, $\sD_S\varOmega_A$ is even a direct factor of $\sH$ (as a $\sD_S$-module, hence as a vector bundle).
   
   \begin{lemma}\label{l1} The formation of $ \sD_S\varOmega_A$ commutes with dominant base change $S'\stackrel{\pi}{\to} S$ (with $S'$ smooth connected).
   \end{lemma}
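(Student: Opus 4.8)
The plan is to reduce the statement to the corresponding well-known fact about the Gauss-Manin connection $\nabla$ itself, then transport it to the sub-$\sD_S$-module generated by $\varOmega_A$. First I would recall that the formation of the pair $(\sH, \nabla)$ is compatible with base change, as stated in \S\ref{GM}: for a morphism $\pi : S' \to S$ one has a canonical isomorphism $\pi^\ast(\sH, \nabla) \cong (\sH_{S'}, \nabla_{S'})$, where $\sH_{S'} = \sH^1_{dR}(A_{S'}/S')$ for the base-changed abelian scheme $A_{S'} = A \times_S S'$. Likewise, $\varOmega_A$ pulls back to $\varOmega_{A_{S'}}$ (the exact sequence \eqref{eq1} being compatible with base change). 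Thus the content of the lemma is that the sub-$\sD_{S'}$-module $\sD_{S'}\varOmega_{A_{S'}}$ of $\pi^\ast \sH$ equals $\pi^\ast(\sD_S\varOmega_A)$.

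The key point is that $\pi^\ast(\sD_S \varOmega_A)$ is automatically a $\sD_{S'}$-submodule of $\pi^\ast\sH$. Indeed, $\sD_S\varOmega_A$ is a $\sD_S$-submodule of $\sH$, and applying $\pi^\ast$ to the defining inclusion $\sD_S\varOmega_A \hookrightarrow \sH$ yields a $\sD_{S'}$-equivariant inclusion (pullback of connections is a functor on the category of $\sD$-modules, compatible with the operation of generating a sub-$\sD$-module by a coherent subsheaf). Concretely, if $\partial'$ is a local vector field on $S'$, then $\pi^\ast \nabla$ differentiates sections of $\pi^\ast(\sD_S\varOmega_A)$ back into $\pi^\ast(\sD_S\varOmega_A)$, because locally $\partial'$ pushes to a combination of the $\pi^\ast$ of vector fields on $S$ with coefficients in $\sO_{S'}$, and $\nabla$-stability of $\sD_S\varOmega_A$ over $S$ is preserved. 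Since $\pi^\ast(\sD_S\varOmega_A)$ contains $\pi^\ast\varOmega_A = \varOmega_{A_{S'}}$ and is a $\sD_{S'}$-submodule, it contains the module $\sD_{S'}\varOmega_{A_{S'}}$ generated by $\varOmega_{A_{S'}}$; this gives the inclusion $\sD_{S'}\varOmega_{A_{S'}} \subseteq \pi^\ast(\sD_S\varOmega_A)$.

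For the reverse inclusion I would argue that, since $\sD_S\varOmega_A$ is a direct factor of $\sH$ as a vector bundle (as recalled just before the lemma via Deligne's semisimplicity theorem \cite[4.2.6]{D}), the generic rank of $\sD_{S'}\varOmega_{A_{S'}}$ over $S'$ is at most the rank of $\pi^\ast(\sD_S\varOmega_A)$, which equals $\operatorname{rk}\sD_S\varOmega_A$. Conversely, the rank of $\sD_{S'}\varOmega_{A_{S'}}$ is at least $\operatorname{rk}\sD_S\varOmega_A$: this is where dominance of $\pi$ enters, since a relation expressing a given iterated derivative of a section of $\varOmega_A$ as an $\sO_S$-combination of others can be pulled back along the dominant $\pi$ without losing information (equivalently, $\sO_S \hookrightarrow \pi_\ast \sO_{S'}$ is injective for $\pi$ dominant with $S'$ irreducible), so no genuine new linear relations among iterated derivatives can appear upstairs and none can disappear. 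Hence the two saturated $\sD_{S'}$-submodules $\sD_{S'}\varOmega_{A_{S'}}$ and $\pi^\ast(\sD_S\varOmega_A)$ agree generically and, being subbundles (direct factors), agree everywhere.

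The main obstacle is the rank-preservation step in the dominant direction: one must be careful that iterated applications of $\nabla$ do not produce, upstairs, fewer linearly independent sections than downstairs, nor more. I would make this precise by choosing a point $s' \in S'$ over a generic point $s \in S$ and comparing the stalks: because $\pi$ is dominant and $S'$ smooth connected, the cotangent map $\pi^\ast \Omega^1_S \to \Omega^1_{S'}$ is generically injective, so that the image of $T_{S'}$ in $\pi^\ast T_S$ generates it over $\sO_{S'}$ at the generic level, whence iterated $\nabla_{S'}$-derivatives span the same subspace as the pullbacks of iterated $\nabla_S$-derivatives. This identifies the fibers of the two subbundles at generic $s'$, and the coincidence then propagates to all of $S'$ by the fact that both are direct summands of $\pi^\ast\sH$ of the same rank containing a common subbundle.
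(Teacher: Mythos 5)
Your proof is correct and follows essentially the same route as the paper's: both reduce to the generic situation where the tangent map $T_{S'}\to \pi^\ast T_S$ is surjective (the paper phrases this as restricting to a dense open over which $\pi$ is a flat submersion, you phrase it via generic injectivity of $\pi^\ast\Omega^1_S\to\Omega^1_{S'}$ in characteristic zero), conclude $\sD_{S'}\varOmega_{A_{S'}}=\pi^\ast(\sD_S\varOmega_A)$ there, and extend everywhere using that both sides are locally direct summands of $\pi^\ast\sH$. Your write-up merely makes explicit the two inclusions and the $\sD_{S'}$-stability of $\pi^\ast(\sD_S\varOmega_A)$, which the paper leaves implicit.
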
  
   
   \begin{proof} Since $\sH$ commutes with base-change and $ \sD_S\varOmega_A$ is locally a direct summand, it suffices to prove the statement after restricting $S$ to a dense affine open subset. In particular, one may assume that $\pi$ is a flat submersion, so that $T_{S'}\to \pi^\ast T_S$ and $\sD_{S'} \to \pi^\ast \sD_S$ are epimorphisms, and $ \sD_{S'}\varOmega_{A_{S'}} = \pi^\ast\sD_{S}\pi^\ast\varOmega_{A }= \pi^\ast ( \sD_S\varOmega_A)$.
   \end{proof}

   \subsubsection{}     As in the introduction, let us define
\begin{equation}\label{1} r = r(A/S) : = {\rm rk}  \, \sD_S \varOmega_A/\varOmega_A,\; \end{equation}
\begin{equation}\label{2}  r' = r'(A/S) : = {\rm rk}\,  \theta =  {\rm rk}  \, \sD^{\leq 1}_S \varOmega_A/\varOmega_A,\; \end{equation}
 \begin{equation}\label{3} r'' = r''(A/S) : = \max_\partial  {\rm rk}\, \theta_\partial,\; \end{equation}
     where $\partial$ runs over local tangent vector fields on $S$ (and $\rm rk$ denotes a generic rank). 

       \begin{lemma}\label{l2} These are invariant by dominant base change $S'\stackrel{\pi}{\to} S$ (with $S'$ smooth connected), and depend only on the isogeny class of $A/S$. 
   \end{lemma}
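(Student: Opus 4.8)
The plan is to treat the two assertions separately, since the tools differ. For invariance under a dominant base change $\pi : S' \to S$, the three quantities are all generic in nature, so I first restrict $S$ to a dense affine open and, exactly as in the proof of \ref{l1}, arrange that $\pi$ is a flat submersion; this is harmless, because shrinking $S$ does not move its generic point $\xi$, and $\pi^{-1}$ of a dense open is a dense open of the (irreducible) $S'$, hence irreducible and dominant onto $\xi$. After this reduction I have at my disposal: $\varOmega_{A_{S'}} = \pi^\ast\varOmega_A$ and $\sH_{A_{S'}} = \pi^\ast\sH$ (compatibility of \eqref{eq1} with base change), $\sD_{S'}\varOmega_{A_{S'}} = \pi^\ast(\sD_S\varOmega_A)$ by \ref{l1}, the epimorphism $T_{S'}\surj\pi^\ast T_S$, and the base-change compatibility of $\theta$.

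Invariance of $r$ and $r'$ is then formal. For $r$, the quotient $\sD_S\varOmega_A/\varOmega_A$ is locally free of rank $r$ (being a direct summand of $\sH$), so its $\pi$-pullback is locally free of the same rank; by the displayed identities this pullback is $\sD_{S'}\varOmega_{A_{S'}}/\varOmega_{A_{S'}}$, whence $r(A_{S'}/S') = r$. For $r'$, the base-change compatibility of $\theta$ together with the surjection $T_{S'}\surj\pi^\ast T_S$ shows that $\theta_{S'}$ factors through $\pi^\ast\theta$ with the same image, so ${\rm Im}\,\theta_{S'} = \pi^\ast({\rm Im}\,\theta)$; since $\pi$ is dominant the generic point $\xi'$ of $S'$ maps to $\xi$, and extending scalars along $k(\xi)\subseteq k(\xi')$ preserves the rank of a linear map, giving $r'(A_{S'}/S') = r'$.

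The one genuinely non-formal point is $r''$, where I expect the main work. Evaluating at $\xi$ (a generic tangent vector extends to a local field after shrinking), one has $r'' = \max_v {\rm rk}\,\theta_\xi(v)$ as $v$ ranges over $T_{S,\xi}$; since $\theta_\xi$ is $k(\xi)$-linear, this is exactly the maximal rank attained on the linear subspace $W := {\rm Im}\,\theta_\xi \subseteq {\rm Hom}(\varOmega_A,\varOmega_{A^t}^\vee)_\xi$. By the discussion above, passing to $S'$ replaces $W$ by $W\otimes_{K} K'$ and the ground field $K := k(\xi)$ by its extension $K' := k(\xi')$. I must therefore show that the maximal rank attained on a linear family of matrices does not grow under a field extension. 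This follows from a minor argument: the maximal rank is the largest $\rho$ for which some $\rho\times\rho$ minor, viewed as a polynomial function on $W$ with coefficients in $K$, does not vanish identically; a polynomial over $K$ that is not identically zero on $K'$-points is not identically zero as a polynomial, hence does not vanish on $K$-points either, because $K$ is infinite (being of characteristic zero). Thus the maximal rank over $K'$ cannot exceed that over $K$, and the reverse inequality is trivial, so $r''(A_{S'}/S') = r''$.

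For invariance under isogeny it suffices, since being isogenous is an equivalence relation (an $S$-isogeny $\phi : A \to B$ of degree $n$ admits $\psi$ with $\psi\phi = [n]_A$), to compare $A$ with the target $B$ of a single $S$-isogeny $\phi$. As $k$ has characteristic zero, $\phi$ is \'etale, so $\phi^\ast$ induces an isomorphism of $\sD_S$-modules $\sH_B \iso \sH_A$ (isogenies induce isomorphisms of Gauss-Manin connections, \S\ref{GM}) carrying $\varOmega_B$ isomorphically onto $\varOmega_A$, that is, a filtered isomorphism respecting $\nabla$. This transports $\sD_S\varOmega_B$, $\sD^{\leq 1}_S\varOmega_B$ and the Kodaira-Spencer maps $\theta_B,\theta_{B,\partial}$ to their counterparts for $A$, so all three ranks agree. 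Combining the two parts proves the lemma.
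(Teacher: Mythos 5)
Your proposal is correct and follows essentially the same route as the paper: reduce to a flat submersion as in Lemma \ref{l1} to handle $r$ and $r'$, and for $r''$ observe that the maximal rank in a linear family of matrices is detected by minor determinants, which are polynomials with coefficients in the base, so it cannot increase under extension of the (infinite, characteristic-zero) coefficient ring or field; the isogeny statement is likewise handled by the induced filtered isomorphism of $(\sH,\nabla)$. Your phrasing of the $r''$ step at the generic point, versus the paper's choice of a basis $\partial_1,\dots,\partial_d$ of $T_S$ and coefficients in $\sO(S)$ versus $\sO(S')$, is only a cosmetic difference.
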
    
       
        \begin{proof} For $r$, this follows from the previous lemma. Its proof also shows that $ \sD^{\leq 1}_S\varOmega_A$ commutes with  base change by flat submersions, which settles the case of $r'$. For $r''$, we may assume that $S$ and $S'$ are affine, that $T_S$ is free and $T_{S'} = \pi^\ast T_S$, and pick a basis $\partial_1, \ldots, \partial_d$ of tangent vector fields; the point is that $ \max_{\lambda_i}\,  {\rm rk}\,\sum \lambda_i \theta_{\partial_i}$ is the same when the $\lambda_i$'s run in $\sO(S)$ or in $\sO(S')$ (consider the $\theta_{\partial_i}$'s as matrices and note that each minor determinant is a polynomial in the $\lambda_i$'s). 
        
        The second assertion is clear since any isogeny induces an isomorphism at the level of $(\sH, \nabla)$.  \end{proof}

         \begin{lemma}\label{l2} 
         \begin{enumerate} 
         \item $r''= g\,$ holds if and only if there exists a local vector field $\partial$ such that $\theta_\partial . \omega \neq 0$ for every non-zero $ \omega  \in \Gamma\varOmega_{A }$.
         \item $r'=g\,$  holds if and only if  for every non-zero $ \omega  \in \Gamma\varOmega_{A } $, there exists a local vector field $\partial$ such that$ \;\theta_\partial . \omega \neq 0$.
         \end{enumerate}        
   %      $r''= g \Leftrightarrow  \exists \partial  \in \Gamma T_S, \, \forall \omega  \in \Gamma\varOmega_{A }\setminus 0, \;\theta_\partial . \omega \neq 0$,      
   %    $\;\;\;\;\;\;\;\;\;\;\;\;\;\;\;\;\;\;\;\;  r'= g \Leftrightarrow   \forall \omega  \in \Gamma\varOmega_{A }\setminus 0, \; \exists \partial \in \Gamma T_S, \;\theta_\partial . \omega \neq 0$.  
    \end{lemma}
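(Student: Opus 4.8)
The plan is to reduce to the principally polarized case and then do everything at the generic point, where both statements become a question of linear algebra over the function field $K$ of $S$. Since $r'$ and $r''$ depend only on the isogeny class of $A/S$ (isogeny invariance, Lemma \ref{l2}), I may assume $A/S$ principally polarized. The polarization identifies ${\rm Lie}\, A^t$ with $\varOmega_A^\vee$, so each $\theta_\partial \colon \varOmega_A \to {\rm Lie}\, A^t = \varOmega_A^\vee$ becomes a \emph{symmetric} bilinear form $b_\partial$ on $\varOmega_A$ by \eqref{eq12}. As $r'$ and $r''$ are generic ranks and $\varOmega_A, {\rm Lie}\, A^t$ are torsion-free, I work on the generic fibre: set $V = \varOmega_A \otimes K$, a $g$-dimensional $K$-vector space, and view $\{\theta_\partial\}$ as a $K$-linear family of symmetric forms on $V$. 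Note that, the target being torsion-free, a nonzero local section $\omega$ with $\theta_\partial.\omega = 0$ exists precisely when $\ker(\theta_\partial \otimes K) \neq 0$, so all conditions may be tested over $K$.

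Part $(1)$ is elementary and uses only that source and target of $\theta_\partial$ both have rank $g$. A map $\theta_\partial \colon \varOmega_A \to {\rm Lie}\, A^t$ of rank-$g$ bundles has generic rank $g$ if and only if $\ker(\theta_\partial \otimes K) = 0$, i.e. if and only if $\theta_\partial.\omega \neq 0$ for every nonzero $\omega$ (a nonzero generic kernel vector, after clearing denominators, produces such an $\omega$; conversely injectivity on sections forces the kernel to vanish). Hence $r'' = \max_\partial {\rm rk}\, \theta_\partial = g$ exactly when some $\partial$ enjoys this property.

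Part $(2)$ is where the symmetry is essential. The condition ``for every nonzero $\omega$ there is $\partial$ with $\theta_\partial.\omega \neq 0$'' says exactly that $\bigcap_\partial \ker(\theta_\partial \otimes K) = 0$, i.e. that the total radical of the family $\{b_\partial\}$ is zero. On the other hand $r' = {\rm rk}\, \theta = \dim_K \sum_\partial {\rm Im}(\theta_\partial \otimes K)$, since the image of $\theta$ is spanned by the vectors $\theta_\partial.\omega$. The bridge is the identity, valid for symmetric forms, that ${\rm Im}\, b_\partial = (\ker b_\partial)^{\perp}$ inside $V^\vee$: both have dimension $g - \dim \ker b_\partial$, and for $v' \in \ker b_\partial$ one has $\langle b_\partial(v), v'\rangle = b_\partial(v, v') = b_\partial(v', v) = 0$, giving the inclusion. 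Summing over $\partial$ and using that the annihilator of an intersection is the sum of annihilators, I get $\sum_\partial {\rm Im}\, b_\partial = \bigl(\bigcap_\partial \ker b_\partial\bigr)^{\perp}$, whence $r' = g - \dim_K \bigcap_\partial \ker(\theta_\partial \otimes K)$. Therefore $r' = g$ if and only if the total radical vanishes, which is precisely the condition of $(2)$.

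I expect the single genuine point to be this appeal to symmetry in $(2)$. Without it, the two conditions are independent: the vanishing of $\bigcap_\partial \ker \theta_\partial$ (the condition of $(2)$, i.e. injectivity of the adjoint $\varOmega_A \to \Omega^1_S \otimes {\rm Lie}\, A^t$) and the surjectivity $r' = g$ of $\theta$ onto ${\rm Lie}\, A^t$ need not coincide for an arbitrary family of maps between rank-$g$ bundles, as a non-symmetric pair of rank-one endomorphisms with transverse kernels already shows. It is exactly the self-adjointness coming from the polarization, through ${\rm Im}\, b_\partial = (\ker b_\partial)^{\perp}$, that forces the span of the images and the intersection of the kernels to be mutual annihilators and so yields the equivalence.
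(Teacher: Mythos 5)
Your proof is correct and follows essentially the same route as the paper: part (1) is the immediate observation that maximal rank of a single $\theta_\partial$ between rank-$g$ bundles means trivial kernel, and part (2) rests on exactly the symmetry $(\theta_\partial.\omega)\cdot\eta=(\theta_\partial.\eta)\cdot\omega$ of \eqref{eq12} coming from the polarization, which the paper invokes by swapping $\omega$ and $\eta$ and which you package as $\sum_\partial {\rm Im}\,b_\partial=\bigl(\bigcap_\partial\ker b_\partial\bigr)^{\perp}$. The reduction to the polarized case and the passage to the generic fibre match the paper's ``assuming $A$ polarized, and after restricting $S$ to a dense open affine subset.''
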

      
  \begin{proof} The first equivalence is immediate, while the second uses the symmetry of \eqref{eq11}: assuming $A$ polarized, and after restricting $S$ to a dense open affine subset, one has $r'=g\Leftrightarrow \forall \omega  \in \Gamma\varOmega_{A }\setminus 0, \; \exists \eta \in \Gamma\varOmega_{A }, \exists \partial \in \Gamma T_S, \;(\theta_\partial . \omega) \cdot \eta \neq 0$.  Since $(\theta_\partial . \eta) \cdot \omega = (\theta_\partial . \omega) \cdot \eta$, one gets  $\forall \omega  \in \Gamma\varOmega_{A }\setminus 0, \; \exists \eta \in \Gamma\varOmega_{A }, \exists \partial \in \Gamma T_S, \;(\theta_\partial . \eta) \cdot \omega \neq 0$. \end{proof}

      \section{Automorphic vector bundles and bi-algebraicity}  
      
           \subsection{Bi-algebraicity of the Kodaira-Spencer map}  
            \subsubsection{}\label{bial}  Let $ \sA_{g, n}$ be the moduli scheme of principally polarized abelian varieties of dimension $g$ with level $n$ structure ($n\geq 3$), and let $\sX \to  \sA_{g, n}$ be the universal abelian scheme. 
       
 The universal covering of $\sA_{g, n}^{an}$ is the Siegel upper half space $\mathfrak H_g$. We denote by $j_{g,n}: \mathfrak H_g \to \sA_{g, n}^{an}$ the uniformizing map (for $g=n=1$, this is the usual $j$-function). The pull-back of the dual of the period lattice $  \ker \exp_{\sX}$ on $\mathfrak H_g$ is a {\it constant symplectic lattice} $\Lambda$. On $\mathfrak H_g$, the Gauss-Manin connection of $ \sX/\sA_{g, n}$ becomes a trivial connection with solution space $\Lambda_\C$.

 On the other hand, $\mathfrak H_g$  is an (analytic) open subset of its ``compact dual" $\mathfrak H_g^\vee$, which is the grassmannian of lagrangian subspaces $V \subset \Lambda^\vee_\C$ (\ie isotropic subspaces of dimension $g$):  the lagrangian subspace $V_\tau$ corresponding to a point $\tau \in \mathfrak H_g $ is $\varOmega_{\sX_{j_{g,n}(\tau)}}\subset  \sH^1_{dR}(\sX_{j_{g,n}(\tau)})  \cong \Lambda_\C^\vee  $ (note that the latter isomorphism depends on $\tau$, not only on $j_{g,n}(\tau)$). The grassmannian $\mathfrak H_g^\vee$ is a homogeneous space for $Sp(\Lambda_\C)$ (in block form $\begin{pmatrix}A& B\\ C &D \end{pmatrix} $ sends $\tau\in \mathfrak H_g$ to $(A\tau + B)(C\tau + D)^{-1}$).
  The vector bundle $ j_{g,n}^\ast {\rm Lie} {\sX }$  is the restriction to $\mathfrak H_g$ of the tautological vector bundle $\sL$ on the lagrangian grassmannian $\mathfrak H_g^\vee$. 
 
 \subsubsection{}\label{ks}   In this universal situation, the Kodaira-Spencer map (in the form of \eqref{eq12}) is an isomorphism 
   \begin{equation}\label{eq13}  T_{ \sA_{g, n}} \stackrel{\sim}{\to} \; S^2 {\rm Lie} {\sX } , \end{equation} 
 and its pull-back to $\mathfrak H_g$ is the restriction of the canonical isomorphism  
  \begin{equation}\label{eq14}  T_{\mathfrak H_{g }^\vee} \stackrel{\sim}{\to} \; S^2 \sL    \end{equation} \cf \eg \cite{G}\cite{CG}.
 
   Any principally polarized abelian scheme with level $n$ structure $A/S$ is isomorphic to the pull-back of $\sX$ by a morphism $S\stackrel{\mu}{\to} \sA_{g, n}$, and the Kodaira-Spencer map of $A/S$ (in the form of \eqref{eq12}) is the pull-back by $\mu$ of the isomorphism \eqref{eq13} composed with $d\mu : T_S \to \mu^\ast T_{ \sA_{g, n}}$. In particular, {\it the Kodaira-Spencer map $\theta$ of $A/S$ vanishes if and only if the image of $S\to  \sA_{g, n}$ is a point, \ie $A/S$ is constant; moreover, if $A/S$ is not constant, $\mu$ is generically finite, and $\partial$ is a non zero section of $T_S$, then $\theta_\partial$ is non zero}.

     \subsection{Relative period torsor} \subsubsection{} The {\it bi-algebraicity} mentioned above refers to the pair of algebraic structures $ \sA_{g, n}, \frak H_g^\vee$, which are transcendentally related via $\frak H_g$ and $j_{g,n}$.

     On the other hand, there is a purely algebraic relation between these two algebraic structures, through the {\it relative period torsor}. This is the $Sp(\Lambda_\C)_{\sA_{g, n}}$-torsor $\Pi_{g,n}\stackrel{\pi}{\to} \sA_{g, n}$ of solutions of the Gauss-Manin connection $\nabla$ of $\sX$. More formally, this is the torsor of isomorphisms 
     $ \sH \to \Lambda_\C^\vee \otimes \sO_{\sA_{g, n}}$ which respect the $\nabla$-horizontal tensors\footnote{this is a reduction from $GSp$ to $Sp$ of the standard principal bundle considered in \cite[III.3]{M}. }. %It has dimension $\frac{g}{2} (5g+3)$ 
     Its generic fiber is the spectrum of the Picard-Vessiot algebra\footnote{in general,  one has to adjoin the inverse of the wronskian together with the entries of a full solution matrix in order to build a Picard-Vessiot algebra, but here the wronskian is a rational function, since the monodromy is contained in $Sp$.}  attached to $\nabla$, namely   
    $\Spec\, \C(\sA_{g,n})[Y_{ij}]_{i,j= 1,\ldots, 2g} \, $ (with the notation of  \ref{GM}).
                
        \subsubsection{}   The canonical horizontal isomorphism $ \sH\otimes _{\sO_{\sA_{g, n}}} \sO_{\frak H_g} \stackrel{\sim}{\to} \Lambda_\C^\vee\otimes_\C  \sO_{\frak H_g}$ gives rise to an analytic map 
      \begin{equation}\label{eq14i}  k: \,\frak H_g \to \Pi_{g,n} \end{equation}  with $\pi \circ k = j_{g,n}$.  In local bases and with the notation of \ref{GM}, $k$ sends $\tau\in \frak H_g$ to the point $Y(\tau) =\begin{pmatrix}  \Omega_1(\tau) & {\rm N}_1(\tau)\\ \Omega_2(\tau) & {\rm N}_2(\tau) \end{pmatrix}$ of $\Pi_{j_{g,n} (\tau)}$. In particular, the image of $k$ is Zariski-dense in $ \Pi_{g,n}$.
    
    On the other hand there is an algebraic $Sp(\Lambda_\C)$-equivariant  map 
        \begin{equation}\label{eq14ii}  \rho : \,\Pi_{g,n}  {\to} \frak H_g^\vee \end{equation}
          which sends a point $p\in  \Pi_{g,n}(\C)$ viewed as an isomorphism  $ \sH_{\pi(p)} \to \Lambda^\vee $ to the image of $ \varOmega_{\sX_{\pi(p)}}$ in $ \Lambda_\C^\vee $. In local basises and with the notation of \ref{GM},  $\rho$ sends $\begin{pmatrix}\Omega_1 &  {\rm N}_1\\ \Omega_2 & {\rm N}_2 \end{pmatrix}$ to $ \tau = \Omega_1 \Omega_2^{-1}$; $ \rho \circ k  $ is the Borel embedding $\frak H_g\inj \frak H_g^\vee$.

         One thus has the following diagram
           \begin{equation}\label{eq14ii}   \frak H_g \to \Pi_{g,n } \stackrel{(\pi,\rho)}{\surj} \sA_{g,n} \times \frak H_g^\vee \end{equation}
         in which the first map has Zariski-dense image, and the second map is surjective (of relative dimension $\frac{g(3g+1)}{2}$) since the restriction of $\rho$ to any fiber of $\pi$ is $Sp(\Lambda_\C)$-equivariant and $\frak H_g^\vee$ is homogeneous.  
         It follows that the graph of $j_{g,n}$ is Zariski-dense\footnote{this property does not extend to $m$-jets for $m\geq 3$.} in $   \frak H_g^\vee \times \sA_{g,n} $.

      The function field of $\Pi_{g,n}$ is studied in detail in \cite{BZ}: it is a differential field both for the derivations of $\sA_{g,n}$ and for the derivations $\partial /\partial \tau_{ij}$ of $\frak H_g^\vee$. Over $\C(\frak H_g^\vee) = \C(\tau_{ij})_{i\leq j\leq g} $, it is generated by (iterated) derivatives with respect to the $\partial /\partial \tau_{ij}$'s of the modular functions (the field of modular functions being $\C(\sA_{g,n})$).

     \subsection{Connected Shimura varieties and weakly special subvarieties} 
     
     \subsubsection{} Let $G$ be a reductive group over $\Q$, $G^{ad}$ the quotient by the center, and $G^{ad}(\R)^+$ the connected component of identity of the Lie group $G^{ad}(\R)$.

     Let $X$ be a connected component of a conjugacy class $\frak X$ of real-algebraic homomorphisms $\C^\ast \to G_\R$. For any rational representation $G \to GL(W)$, one then has a collection of real Hodge structures $(W_\R, h_x)_{x\in X}$ on $W_\R$ parametrized by $X$. If the weight is defined over $\Q$ (which is the case if $G= G^{ad}$ since the weight is $0$ in this case), one even has a collection of rational Hodge structures $(W , h_x)_{x\in X}$ . 
     
   In the sequel, we assume that $(G, \frak X)$ satisfies Deligne's axioms for a {\it Shimura datum}; these axioms ensure that $X$ has a $G^{ad}(\R)^+$-invariant metric, which makes $X$ into a hermitian symmetric domain, and that the $(W_\R, h_x)$ (\resp $(W , h_x)$) come from variations of polarized Hodge structures on the analytic variety $X$ (\resp if the weight is defined over $\Q$, for instance if $G=G^{ad}$); moreover, in the case of the adjoint representation on $\frak g = {\rm Lie}\, G $, the variation of Hodge structures is of type $(-1,1)+(0,0)+(1,-1)$ (\cf \eg \cite[II]{M}).

       \subsubsection{} Let $\Gamma$ be a discrete subgroup of $G^{ad}(\Q)^+$, quotient of a torsion-free congruence subgroup of $G(\Q)$. Then $ \Gamma \backslash X$ has a canonical structure of algebraic variety (Baily-Borel):   the {\it connected Shimura variety} attached to $(G, X, \Gamma)$. The variation of Hodge structures descends to it, with monodromy group $\Gamma.$ The situation of 2.1 corresponds to the case $G= GSp_{2g}, \; X= \frak H_g, \; \Gamma = $ the congruence subgroup of level $3$ (\cf \eg \cite[II]{M}).
     
      \subsubsection{} Let $S$ be the connected Shimura variety associated to attached to $(G, X, \Gamma)$, and $j: X \to S$ the uniformizing map.   An irreducible subvariety $S_1\subset S$ is {\it weakly special} if there is a sub-Shimura datum $(H, \frak Y)\to (G, \frak X)$, a 
decomposition $(H^{ad}, \frak Y^{ad}) = (H_1, \frak Y_1)\times (H_2, \frak Y_2)$, and a point $y\in \frak Y_2$ such that $S_1$ is the image of  $Y_1 \times y$ in $S$ (here $Y_1$ is a connected component of $\frak Y_1$ contained in $X$)\cite{UY}\footnote{this is a {\it special} subvariety if $y$ is a special point. }; in particular, $S_1$ is isomorphic to the connected Shimura variety attached to $(H_1, Y_1, \Gamma^{ad}\cap H_1)$.

  \newpage   \subsection{Automorphic vector bundles} 
     
     \subsubsection{} Given a faithful rational representation $W$ of $G$, the associated family of Hodge filtrations on $W_\C$ is parametrized by a certain flag variety $X^\vee$, the {\it compact dual} of $X$, which is a $G^{ad}_\C$-homogeneous space.
     
     The isotropy group of a point $x \in X^\vee$ is a parabolic subgroup $P_x$,  $K_x := P_x \cap G^{ad}(\R)^+$ is a maximal compact subgroup, and there is a $G^{ad}(\R)^+$-equivariant {\it Borel embedding} 
      \begin{equation}\label{eq16} X = G^{ad}(\R)^+/ K_x \stackrel{i}{\inj} X^\vee = G^{ad}_\C/ P_x. \end{equation}  
         
  \subsubsection{} Associated to $W$, there is a variation of polarized Hodge structures on $S = \Gamma \backslash X $, hence an integrable connection $\nabla$ with regular singularities at infinity on the underlying vector bundle  $ {\sW}$ . There is again a relative period torsor in this situation. 
  
  Assume for simplicity that $G=G^{ad}$. The monodromy group $\Gamma$ is then Zariski-dense in $G$. 
     The {\it relative period torsor} 
        \begin{equation}\label{eq16i}  \Pi\stackrel{\pi}{\to} S \end{equation} is the $G_S$-torsor of isomorphisms 
     $ \sW \to W_C \otimes \sO_{S}$ which respects the $\nabla$-horizontal tensors\footnote{it coincides with the standard principal bundle considered in \cite[III.3]{M}.}.  Its generic fiber is the Picard-Vessiot algebra attached to $\nabla$. 
        
    The canonical horizontal isomorphism $ \sW\otimes _{\sO_{S}} \sO_{X} \stackrel{\sim}{\to} W_\C \otimes  \sO_{X}$ gives rise to an analytic map 
       $k: X \to \Pi $ with $\pi \circ k = j $.   
 There is an algebraic $G_\C$-equivariant  map $\Pi  \stackrel{\rho}{\to} X^\vee$ (which sends a point $p\in \Pi(\C)$ viewed as an isomorphism  $ \sW_{\pi(p)} \to W$ to the point of $X^\vee$ which parametrizes the image of the Hodge filtration of $ \sW_{\pi(p)}  $);  one has $ \rho \circ k = i $.
 
         One thus has the following factorization:
           \begin{equation}\label{eq16ii} (j, i):\; X \to \Pi \stackrel{(\pi,\rho)}{\surj} S \times X^\vee \end{equation}
         in which the first map has dense image, the second map is surjective (since the restriction of $\rho$ to any fiber of $\pi$ is $G_\C$-equivariant with homogeneous target).  
        
\smallskip
 
 Since any faithful rational representation of $G$ lies in the tannakian category generated by $W$ and conversely, neither $X^\vee$ nor $\Pi$ depend on the auxiliary $W$. On the other hand, $\pi^\ast$ provides an equivalence between the category of vector bundles on $S$ and the category of $G_\C$-vector bundles on $\Pi$ \cite[III.3.1]{M}.  
  
    \subsubsection{}  A $G_\C$-equivariant vector bundle $\breve  \sV$ on $X^\vee = G_\C/ P_x$ is completely determined by its fiber at $x \in X$ together with the induced $P_x$-action (or else, the induced $K_x$-action).  The quotient $\sV := \Gamma \backslash i^{\ast}\breve  \sV$ has a canonical structure of algebraic vector bundle on $S = \Gamma \backslash X$, the {\it automorphic vector bundle} attached to $\breve \sV$ \cite[III.2.1, 3.6]{M}. One has the equality of analytic vector bundles on $X$:
    \begin{equation}\label{eq17} j^{\ast}\sV = i^{\ast}\breve \sV . \end{equation}  
      There is also a purely algebraic relation between $\sV$ and $\breve  \sV$, through the relative period torsor \cite[III.3.5]{M}: one has the equality of algebraic $G_\C$-vector bundles on $\Pi$:
    \begin{equation}\label{eq18} \pi^\ast \sV = \rho^\ast \breve  \sV. \end{equation}   
    
    \subsubsection{} Any representation of $G_\C$ gives rise to a $G_\C$-equivariant vector bundle on $X^\vee$, hence to an automorphic vector bundle (which carries an integrable connection).
    
    On the other hand, $T_{X^\vee}$ is a $G_\C$-equivariant vector bundle on $X^\vee$, and the corresponding automorphic vector bundle is nothing but $T_S$.

   In the situation of \ref{ks},  The  tangent bundle $T_{\frak H_g^\vee}$ and its tautological bundle $\sL$ are equivariant vector bundles, and the universal Kodaira-Spencer map \eqref{eq13} is an isomorphism of automorphic vector bundles on $ \sA_{g, n}$.

      \subsection{A theorem of logarithmic Ax-Schanuel type for tangent bundles}
  
   \subsubsection{}\label{swss}   The theorem of logarithmic Ax-Schanuel for connected Shimura varieties is the following \cite[2.3.1]{Ga} (\cf also \cite{UY})\footnote{not to be confused with the (exponential) Ax-Schanuel theorem - a.k.a. hyperbolic Ax-Lindemann - for connected Shimura varieties, which concerns the maximal irreducible algebraic subvarieties of $X^\vee$ whose intersection with $X$ is contained in $\tilde Z$, and which is a much deeper result (Pila-Tsimerman, Klingler-Ullmo-Yafaev).}:
  
    \begin{thm}\label{t3} Let $S$ be a connected Shimura variety ($S^{an} = \Gamma \backslash X$). Let $Z\subset S$ be an irreducible locally closed subset, and let $\tilde Z$ be an analytic component of the inverse image of $Z$ in $X$. 
  
  Then the image in $S$ of the Zariski closure of $\tilde Z$ in the compact dual $X^\vee$ is the smallest weakly special subvariety  $S_1\subset S$ containing $Z$.    \end{thm}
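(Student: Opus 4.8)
The plan is to prove the two inclusions separately, using the characterization (Moonen \cite{Mo}, Ullmo--Yafaev \cite{UY}) of weakly special subvarieties as the totally geodesic ones, equivalently as the \emph{bi-algebraic} ones: a connected component $\widetilde{S_1}$ of $j^{-1}(S_1)$ is weakly special exactly when it is the trace on $X$ of an algebraic subvariety $\widehat{S_1}$ of the compact dual $X^\vee$. Concretely, if $S_1$ is the image of $Y_1\times\{y\}$ for a sub-datum with $(H^{ad},\frak Y^{ad})=(H_1,\frak Y_1)\times(H_2,\frak Y_2)$ and $y\in\frak Y_2$, then a component of $j^{-1}(S_1)$ is $Y_1\times\{y\}$ and its Zariski closure in $X^\vee$ is $Y_1^\vee\times\{y\}$, where $Y_1^\vee$ is the compact dual of $Y_1$ (an $H_{1,\C}$-orbit, hence projective). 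Write $W$ for the Zariski closure of $\tilde Z$ in $X^\vee$ (irreducible, since $\tilde Z$ is). For the \emph{easy inclusion} $j(W\cap X)\subseteq S_1$: choosing the component $\widetilde{S_1}$ of $j^{-1}(S_1)$ containing $\tilde Z$, bi-algebraicity gives $\widetilde{S_1}=\widehat{S_1}\cap X$ with $\widehat{S_1}\subset X^\vee$ algebraic; since $\tilde Z\subseteq\widehat{S_1}$ we get $W\subseteq\widehat{S_1}$, whence $W\cap X\subseteq\widehat{S_1}\cap X=\widetilde{S_1}$ and $j(W\cap X)\subseteq S_1$.

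For the \emph{hard inclusion} I would show that $j(W\cap X)$ is itself weakly special; since it contains $j(\tilde Z)=Z$, minimality of $S_1$ then forces $S_1\subseteq j(W\cap X)$ and equality follows. Let $H$ be the connected algebraic monodromy group of the smooth locus of $Z$, i.e. the identity component of the Zariski closure in $G$ of the stabilizer $\Gamma_{\tilde Z}$ of the component $\tilde Z$ in $\Gamma$. By Deligne's semisimplicity and the theorem of Andr\'e (see also \cite{Mo}), $H$ is semisimple and is a normal subgroup of the derived group $G_Z^{der}$ of the generic Mumford--Tate group $G_Z$ of the variation on $Z$. Passing to adjoint groups yields an almost-direct-product decomposition $G_Z^{ad}=H_1\times H_2$ with $H_1$ the image of $H$, and a corresponding splitting $X_{G_Z}=Y_1\times Y_2$ of the associated symmetric domain, with $\tilde Z\subseteq Y_1\times Y_2$.

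Two facts then pin down $W$. First, the $H_2$-part of the variation has trivial monodromy along $Z$ (the monodromy lies in $H=H_1$), so by Deligne's theorem of the fixed part \cite[4.1.2]{D} it is constant: the projection of $\tilde Z$ to $Y_2$ is a single point $y$, i.e. $\tilde Z\subseteq Y_1\times\{y\}$ and hence $W\subseteq Y_1^\vee\times\{y\}$. Second, $\Gamma_{\tilde Z}$ preserves $\tilde Z$ and acts by algebraic automorphisms of $X^\vee$, so it preserves $W$; as the stabilizer $\{g\in G^{ad}_\C:gW=W\}$ is Zariski-closed and contains $\Gamma_{\tilde Z}$, it contains $H(\C)$. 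Thus $W$ is $H(\C)$-stable, and for a base point $\tilde z=(\tilde z_1,y)$ the orbit $H(\C)\cdot\tilde z=(H_1(\C)\cdot\tilde z_1)\times\{y\}=Y_1^\vee\times\{y\}$ lies in $W$. Combining the two inclusions gives $W=Y_1^\vee\times\{y\}$, so $W\cap X=Y_1\times\{y\}$ and $j(W\cap X)$ is the image of $Y_1\times\{y\}$, a weakly special subvariety containing $Z$. By minimality this forces equality with $S_1$.

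The main obstacle is the pivotal $H(\C)$-stability step and its exact matching against the upper bound: one needs simultaneously that the monodromy is Zariski-dense in a semisimple group acting on $X^\vee$ with compact-dual orbits, and that the complementary $H_2$-directions are genuinely frozen. The deep inputs feeding this are Deligne's theorem of the fixed part \cite[4.1.2]{D} and Andr\'e's normality theorem for the algebraic monodromy inside the derived Mumford--Tate group; granting these, the identification of $H(\C)$-orbits in $X^\vee$ with compact duals of totally geodesic subdomains together with the bi-algebraicity characterization of weakly special subvarieties \cite{Mo}\cite{UY} reduce the theorem to the elementary double inclusion above.
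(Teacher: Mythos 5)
Your argument is correct and follows essentially the same route as the paper's sketch: replace the ambient datum by the generic Mumford--Tate datum of $Z$, invoke Andr\'e's normality theorem to split off the monodromy factor $H_1$ and Deligne's fixed part to freeze the $H_2$-directions, then use Zariski-density of $\Gamma_{\tilde Z}$ in $H$ to see that the Zariski closure of $\tilde Z$ is $H(\C)$-stable and hence, by homogeneity of the compact dual, equal to $Y_1^\vee\times\{y\}$. The only difference is presentational: you spell out the easy inclusion via the bi-algebraicity characterization of weakly special subvarieties, which the paper leaves implicit by citing Moonen and Ullmo--Yafaev.
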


   Here is a sketch of proof. One can replace $S$ by the smallest special subvariety containing $Z$. Fix a point $s\in Z(\C)$ and a faithful rational representation of $G$, and consider the associated vector bundle $\sW$ with integrable connection $\nabla$ on $S$. Let $\hat G_1\subset G$ be the Zariski closure of the monodromy group $\Gamma_Z$ of $(\sW_{\mid Z}, \nabla_{\mid Z})$ at $s$. Up to replacing $\Gamma$ by a subgroup of finite index, $\hat G_1$ is connected and a normal subgroup of $G$  (by \cite[5]{A}). This gives rise to a weakly special subvariety $S_1\subset S$ associated to a factor $G_1= \hat G_1^{ad}$ of $G^{ad}$, which is in fact {\it the smallest weakly special subvariety of $S$ containing $Z$} 
    (\cf \cite[3.6]{Mo}, \cite[4.1]{UY} for details). On the other hand, since $\tilde Z$ is stable under $\Gamma_Z$, its Zariski closure in the $G_\C$-homogeneous space $X_1^\vee$ is stable under $G_1$, hence equal to $X_1^\vee$.   \qed

   Here is the analog for tangent vector bundles, assuming $Z$ smooth:
   
       \begin{thm}\label{t4} In this situation,  $T_{\tilde Z}$ is Zariski-dense in $T_{X_1^\vee}$. 
    \end{thm}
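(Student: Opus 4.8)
The plan is to transport the monodromy argument that proves \ref{t3} to the total space of the tangent bundle, and to reduce the assertion to the prehomogeneity of the isotropy representation of the Hermitian symmetric domain $X_1$.

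First I would note that $\tilde Z$, being a component of $j^{-1}(Z)$, is stable under the monodromy group $\Gamma_Z$; since $Z$, hence $\tilde Z$, is smooth, the total space $T_{\tilde Z}$ is a complex submanifold of $T_{X_1^\vee}$ stable under the action of $\Gamma_Z$ obtained by differentiating the algebraic action of $G_{1,\C}$ on $X_1^\vee$. Let $W\subseteq T_{X_1^\vee}$ be the Zariski closure of $T_{\tilde Z}$. As in \ref{t3} the group $G_1=\hat G_1^{ad}$ is the connected algebraic monodromy group, so $\Gamma_Z$ is Zariski-dense in $G_{1,\C}$; since its action on $T_{X_1^\vee}$ is the restriction of an algebraic $G_{1,\C}$-action, $W$ is $G_{1,\C}$-stable. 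Moreover $W$ dominates $X_1^\vee$, because the image of $T_{\tilde Z}$ under the projection $T_{X_1^\vee}\to X_1^\vee$ is $\tilde Z$, whose Zariski closure is $X_1^\vee$ by \ref{t3}.

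Next I would reduce the equality $W=T_{X_1^\vee}$ to a statement in a single fibre. Fix $x\in X_1^\vee$ with isotropy the parabolic $P_x$ and Levi quotient $L_x$. The unipotent radical of $P_x$ acts trivially on $\mathfrak g_1/\mathfrak p_x\cong T_{X_1^\vee,x}$, so the fibre $W_x$ is an $L_x$-stable closed cone; by homogeneity of $X_1^\vee$ the whole of $W$ is recovered from $W_x$ by transport under $G_{1,\C}$, so that $W=T_{X_1^\vee}$ if and only if $W_x=T_{X_1^\vee,x}$. Here enters the structural fact that the isotropy representation of a Hermitian symmetric space is a prehomogeneous vector space: $L_x$ has a dense open orbit $\mathcal O_x$ in $T_{X_1^\vee,x}$, with finitely many orbits in total, nested by rank. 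Therefore $W=T_{X_1^\vee}$ as soon as $W_x$ meets $\mathcal O_x$, that is, as soon as $\tilde Z$ possesses, at some point, a tangent vector of maximal rank.

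The main obstacle is precisely to exclude the degenerate alternative, namely that every tangent vector to $\tilde Z$ lies in the discriminant $T_{X_1^\vee,x}\setminus\mathcal O_x$, so that $\tilde Z$ is everywhere tangent to the proper $G_1$-invariant cone field cut out by the relative invariant. Passing to the linear span of $W_x$ produces a $G_1$-invariant subbundle of $T_{X_1^\vee}$, which by irreducibility of the $L_x$-action on each simple factor of $\mathfrak g_1/\mathfrak p_x$ must exhaust that factor; this rules out $W_x$ being a proper linear subspace, but not its being a proper invariant cone such as a determinantal locus. To handle the latter I would pass to the projectivised Gauss image $\gamma(\tilde Z)\subseteq\mathbb P\,T_{X_1^\vee}$: its Zariski closure is $G_1$-invariant, hence by finiteness of the $L_x$-orbit stratification equals the closure of a single stratum $\mathbb P\{\mathrm{rank}=k\}$, a $G_1$-homogeneous variety fibred over $X_1^\vee$; I would then contradict $k<\mathrm{max}$ by applying the method of \ref{t3} to the lift $\gamma(\tilde Z)$ inside this auxiliary flag variety, forcing its Zariski closure into a proper weakly special locus and contradicting the minimality of $S_1$. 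After reducing to $G_1$ simple by splitting the factors of $\mathfrak g_1/\mathfrak p_x$, making this last step precise — in effect, that a submanifold with full monodromy cannot be everywhere tangent to a proper invariant cone field, the emblematic case being the variety of minimal rational tangents — is where the real work lies.
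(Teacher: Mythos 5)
There is a genuine gap, and it is the one you flag yourself. Your reductions are correct as far as they go: the Zariski closure $W$ of $T_{\tilde Z}$ is $G_{1,\C}$-stable because $\Gamma_Z$ is Zariski-dense in $G_1$, it is determined by a single fibre $W_x$ by homogeneity, and since the isotropy representation of $L_x$ on $\mathfrak g_1/\mathfrak p_x\cong\mathfrak u^-$ is prehomogeneous, $W=T_{X_1^\vee}$ if and only if $W_x$ meets the open orbit $\mathcal O_x$. But the complement of the open-orbit locus is precisely the largest proper closed $G_1$-invariant subvariety of $T_{X_1^\vee}$, so ``$T_{\tilde Z}$ meets the open orbit somewhere'' is not a reduction of the theorem, it \emph{is} the theorem. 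Everything therefore rests on the last step, and the sketch you give for it does not go through: the closure of a rank stratum in $\mathbb P\,T_{X_1^\vee}$ is a $G_1$-homogeneous bundle but not the compact dual of a sub-Shimura datum, so Theorem \ref{t3} says nothing about the Gauss lift $\gamma(\tilde Z)$ inside it, and being everywhere tangent to an invariant cone field is a first-order condition that cannot contradict the minimality of $S_1$, which is a zeroth-order (point-set) statement. As written, no contradiction is derived, and indeed the ``variety of minimal rational tangents'' phenomenon you invoke shows that invariant cone fields with many integral submanifolds do exist in general homogeneous settings; something specific to the present situation must be used.

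The paper closes exactly this gap by working on the relative period torsor $\Pi\to S_1$ rather than on $X_1^\vee$. Since $\nabla_{\mid Z}$ has full monodromy group $G_1$, the lift $k(\tilde Z)$ is Zariski-dense in $\Pi_Z$, hence $(k\times 1)(T_{\tilde Z})$ is Zariski-dense in $\Pi_Z\times_Z T_Z$; the automorphy identity $\pi^\ast T_{S_1}=\rho^\ast T_{X_1^\vee}$ then converts the inclusion $T_{\tilde Z}\hookrightarrow T_{X_1^\vee}$ into the restriction of an algebraic map $\tau$ defined on all of $\Pi_Z\times_Z T_Z$, whose image has the same closure as the image of the tangent map of $\rho_{\mid \Pi_Z}:\Pi_Z\to X_1^\vee$. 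That map is surjective (equivariant with homogeneous target), so its tangent map is dominant, and density follows with no orbit analysis at all. If you want to keep your fibrewise framework, the missing ingredient is precisely this: the tangent directions contributed by $\tilde Z$ at the various preimages in $\Pi_Z$ of a fixed point of $X_1^\vee$ sweep out the image of a dominant algebraic map, hence cannot all lie in the discriminant.
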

   
     \begin{proof}   We may replace $G$ by $G_1$ and $S$ by $S_1$.  Let $\overline{T_{\tilde Z}}$ be the Zariski closure of $T_{\tilde Z} =   \tilde Z \times_Z T_Z$ in $T_{X^\vee}$. Let $(\sW, \nabla)$ be as above, and let $\Pi $ be the relative period torsor of $S $ (we take over the notation \eqref{eq14i}  \eqref{eq14ii}).  Since $(\sW_{\mid Z}, \nabla_{\mid Z})$ has the same algebraic monodromy group as $(\sW, \nabla)$, namely $G$,  the generic fiber of the projection $\Pi_Z\stackrel{\pi_Z}{\to} Z$ is the spectrum of the Picard-Vessiot algebra attached to $(\sW_{\mid Z}, \nabla_{\mid Z})$, and the image of $k_{\mid \tilde Z}(\tilde Z)$ in Zariski-dense in $\Pi_Z$. It follows that   $(k_{\mid \tilde Z}\times 1_{T_Z})( T_{\tilde Z})= k_{\mid \tilde Z}(\tilde Z)\times_Z T_Z$ is Zariski-dense in $\Pi_Z\times_Z T_Z $. 
      
       Let us consider the composition of algebraic maps 
 $$ \tau:\; \Pi_Z\times_Z T_Z   =\Pi \times_S T_Z \to \Pi \times_S T_S = \Pi \times _{X^\vee} {T_{\tilde Z}} \to {T_{\tilde Z}}$$ (the second equality uses the fact that $T_S$ is an automorphic vector bundle with $\breve T_S = T_{X^\vee}$, cf \eqref{eq18}). 
  The inclusion $T_{\tilde Z}\to  T_{X^\vee} $ factors as $\tau \circ (k_{\mid \tilde Z}\times 1_{T_Z})$. Hence $\overline{T_{\tilde Z}}$ is the Zariski closure of the image of $\tau$.   
     
      On the other hand,  $\Pi_Z\stackrel{\pi_Z}{\to} Z$ induces a surjective map $T_{\Pi_Z}\to \Pi_Z\times_Z T_Z$, whose composition with $\tau$ is the tangent map 
      $   T_{\Pi_Z} \stackrel{T_{\upsilon}}{\to} T_{X^\vee}$  of $$\upsilon: \Pi_Z\to X^\vee.$$ Hence $\overline{T_{\tilde Z}}$ is also the Zariski closure of the image of $T_\upsilon $. Since $\upsilon $ is equivariant with homogeneous target, it is surjective, hence $T_\upsilon$ is dominant. One concludes that  $\overline{T_{\tilde Z}} =  T_{X^\vee}$.
          \end{proof} 
          
          This also holds, with the same proof, for higher jets bundles $J^m\tilde Z\subset J^m X_1^\vee$.
         
         \begin{rem} In general, given an algebraic vector bundle $\sM$ on an algebraic variety $Y$, the Zariski closure of an analytic subbundle  over some Zariski-dense analytic subspace of $Y$ is not necessarily an algebraic subbundle of $\sM$: for instance, the Zariski closure in $T_{\C^2}$ of the tangent bundle of the graph in $\C^2$ of a Weierstrass $\wp$ function is a quadric bundle over ${\C^2}$, not a vector subbundle of $T_{\C^2}$ (a similar counterexample holds for the graph of the usual $j$-function and its bundle of jets of order $\leq 3$, since $j$ satisfies a rational non-linear differential equation of order $3$). 
         
         On the other hand, theorem \ref{t4} does not extend to arbitrary automorphic vector bundles: it may happen that the Zariski closure of the pull-back on $\tilde Z$ of a vector subbundle of $T_Z$ is not vector subbundle of $T_{S_1}$. However, one has the following easy consequence of \ref{t3}:          \end{rem}
     
     \begin{por} In the same situation, let $\sV$ be an automorphic vector bundle on $S$ with corresponding vector bundle $\breve \sV$ on $X^\vee$, and let $\sF$ be a vector subbundle of the restriction of $\sV $ to $Z$. Then $Z$ and $\sF$ are bi-algebraic if and only if $Z$ is a weakly special subvariety and $\sF$ is an automorphic vector bundle.
 \end{por} 
 
The assumption ``$Z$ is bi-algebraic" means that $\tilde Z$ is the intersection of $X$ with an algebraic subvariety of $X^\vee$, and according to \ref{t3}, this amounts to $Z=S_1$.
 
The assumption ``$\sF$ is bi-algebraic" means that its pull-back $\tilde \sF$ in  $\breve \sV$ is an algebraic subvariety. Since $Z=S_1$, this amounts to say that this analytic subbundle of  $  \sV\times_S {X_1^\vee} = \breve \sV\times_{X^\vee} X_1^\vee$ is algebraic. It is in fact a $G_{1\C}$-vector subbundle, so that $\sF$ is an automorphic vector bundle on  $ X_1^\vee$.     \qed
      
     Using the relative period torsor, one can also prove the following stronger version of \ref{t3}: 
     
  \begin{sco}  in the setting of \ref{t3}, the graph of $j_{\mid \tilde Z}$ is Zariski-dense in $X_1^\vee \times Z$. 
  \end{sco} 
 
Indeed, it follows from \eqref{eq16ii} (with $S_1$ in place of $S$) that the map $\Pi_{1\mid Z} \stackrel{(\rho, \pi_Z)}{\to} X_1^\vee \times Z$ is surjective. On the other hand, the image of $k_{\mid \tilde Z}(\tilde Z)$ is Zariski-dense in $\Pi_{1\mid Z}$.  \qed

    \section{Transition to the modular case}

   We go back to the study of $r(A/S), \, r'(A/S), \, r''(A/S)$ for an abelian scheme $A/S$.  We may assume that the base field $k$ is $\C$. According to lemma \ref{l2}, these ranks are invariant by dominant base change of $S$ and by isogeny of $A$, hence one may assume that $A/S$ admits a principal polarization and a Jacobi level $n$ structure for some $n\geq 3$, and then replace $S$ by the smooth locus $Z\subset \sA_{g, n}$ of its image in the moduli space of principally polarized abelian varieties of dimension $g$ with level $n$ structure, and $A$ by the restriction $\sX_Z$ of the universal abelian scheme $\sX$ on  $ \sA_{g, n}$.

 \subsection{From $Z$ to the smallest weakly special subvariety of $ \sA_{g, n}$ containing $Z$} 
      
    Let us consider again the situation of \ref{swss}, with $S = \sA_{g, n}$. Given a (locally closed) subvariety $Z\subset \sA_{g, n}$, one constructs the smallest weakly special subvariety $ S_1\subset  \sA_{g, n}$ containing $Z$, taking  $(\sW, \nabla)$ equal to $\sH^1_{dR}(\sX/\sA_{g, n})$ with its Gauss-Manin connection. By construction, $ S_1(\C) = \Gamma_1 \backslash X_1$ where $X_1$ is a hermitian symmetric domain attached to the adjoint group $G_1$ of the connected algebraic monodromy group of $\nabla_{\mid Z}$. 
       
      \begin{thm}\label{t5} One has $r(\sX_Z/ Z) = r(\sX_{S_1}/ S_1)$ and $r''(\sX_Z/ Z) = r''(\sX_{S_1}/ S_1)$. 
      \end{thm}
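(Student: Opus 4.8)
The plan is to prove the two equalities separately, deducing the $r$-statement from the logarithmic Ax--Schanuel theorem \ref{t3} and the $r''$-statement from its tangent-bundle refinement \ref{t4}. In both cases the starting point, furnished by \ref{ks}, is that the Kodaira--Spencer data of $\sX_Z/Z$ and of $\sX_{S_1}/S_1$ are restrictions of the universal one on $\sA_{g,n}$, so that after uniformizing, everything takes place inside the compact dual $\mathfrak H_g^\vee$ and its tautological objects. Throughout I write $V_{\tilde s}=\varOmega_{\sX_{j_{g,n}(\tilde s)}}\subset\Lambda^\vee_\C$ for the period subspace at $\tilde s$, i.e. the point of the Lagrangian grassmannian $\mathfrak H_g^\vee$ determined by $\tilde s$ as in \ref{bial}, and I recall that both $Z$ and $S_1$ have the same connected algebraic monodromy group, whose adjoint quotient is $G_1$, and that $\tilde Z\subset X_1\subset X_1^\vee$.

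For $r$, I would first describe $\sD_S\varOmega_A$ on the universal cover. Over $\tilde Z$ (resp. over $X_1$) the Gauss--Manin connection is the trivial connection with solution space $\Lambda_\C$ (\ref{bial}), and $\varOmega_A$ becomes the moving subspace $\tilde s\mapsto V_{\tilde s}$. Since the trivial connection is differentiation, the $\sD_S$-submodule generated by $\varOmega_A$ pulls back to the smallest constant subbundle containing this moving subspace; as $\sD_S\varOmega_A$ is a subbundle of $\sH$ (it is a direct factor, as recalled before \ref{l1}), its constant fibre is exactly $W(Z):={\rm span}_{\tilde z\in\tilde Z}V_{\tilde z}$, which is monodromy-invariant because $\gamma V_{\tilde s}=V_{\gamma\tilde s}$. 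Hence $r(\sX_Z/Z)=\dim W(Z)-g$, and likewise $r(\sX_{S_1}/S_1)=\dim W(S_1)-g$ with $W(S_1)={\rm span}_{\tilde s\in X_1}V_{\tilde s}$. It remains to see that the two spans agree. For a fixed $W\subset\Lambda^\vee_\C$ the condition $V_{\tilde s}\subset W$ is a Schubert (hence Zariski-closed) condition on $\mathfrak H_g^\vee$; the locus where $V_{\tilde s}\subset W(Z)$ is therefore closed and contains $\tilde Z$, so by \ref{t3} it contains the Zariski closure $X_1^\vee\supset X_1$. This gives $W(S_1)\subset W(Z)$, while $W(Z)\subset W(S_1)$ is clear from $\tilde Z\subset X_1$; thus $W(Z)=W(S_1)$ and $r(\sX_Z/Z)=r(\sX_{S_1}/S_1)$.

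For $r''$, I would use the Kodaira--Spencer map in the form \eqref{eq12}, $\partial\mapsto\theta_\partial\in S^2{\rm Lie}\,\sX$, whose pull-back to $\mathfrak H_g^\vee$ is the canonical isomorphism $T_{\mathfrak H_g^\vee}\iso S^2\sL$ of \eqref{eq14}; here ${\rm rk}\,\theta_\partial$ is the rank of the associated symmetric form. Set $\phi\colon T_{X_1^\vee}\to\Z$, $v\mapsto{\rm rk}\,\theta_v$, and $M:=\max_v\phi(v)$. Since the tangent directions to $S_1$ are exactly the elements of $T_{X_1^\vee}$ along $X_1$, and since $X_1^\vee$ is $G_1$-homogeneous with the isomorphism $G_1$-equivariant, the maximum $M$ is attained over points of $X_1$ as well, so $r''(\sX_{S_1}/S_1)=M$. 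The locus $\{\phi=M\}$ is cut out by the non-vanishing of a maximal minor, hence is Zariski-open and dense in the irreducible variety $T_{X_1^\vee}$. Now \ref{t4} asserts that $T_{\tilde Z}$ is Zariski-dense in $T_{X_1^\vee}$, so it meets $\{\phi=M\}$; as the tangent directions to $Z$ are precisely the elements of $T_{\tilde Z}$, there is a direction $\partial$ with ${\rm rk}\,\theta_\partial=M$ at some point, and by lower semicontinuity of the rank, extending $\partial$ to a local field yields $r''(\sX_Z/Z)\ge M$. The reverse inequality $r''(\sX_Z/Z)\le r''(\sX_{S_1}/S_1)=M$ follows because $T_Z\subset T_{S_1}|_Z$ realizes the Kodaira--Spencer image of $Z$ fibrewise inside that of $S_1$. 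Hence $r''(\sX_Z/Z)=M=r''(\sX_{S_1}/S_1)$.

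The main obstacle I anticipate lies in Part 1: one must justify carefully the identification $\sD_S\varOmega_A\leftrightarrow W(Z)$, namely that the generic fibre of the $\sD$-module really is the constant span $W(Z)$ (using that it is a subbundle, so that flatness on the simply connected cover forces the fibre to be constant) and that it descends to $S$ via the monodromy-invariance of $W(Z)$; the interchange of ``span over $\tilde Z$'' with ``span over $X_1^\vee$'' is then the only genuinely transcendental input, supplied by \ref{t3}. Granting this, both equalities are short consequences of \ref{t3} and \ref{t4}, the rank statement requiring in addition only the elementary semicontinuity of the rank of a symmetric form.
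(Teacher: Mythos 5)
Your proof is correct and follows essentially the same route as the paper: the $r''$-equality is obtained exactly as in the text, by combining Theorem \ref{t4} with the algebraicity of the rank strata in $S^2\,{\rm Lie}\,\sX \cong T_{\mathfrak H_g^\vee}$ and the semicontinuity of the rank. For the $r$-equality the paper invokes directly the equality of the connected algebraic monodromy groups of $\nabla_{\mid Z}$ and $\nabla_{\mid S_1}$ to identify $\sD_Z\varOmega_{\sX_Z}$ with $(\sD_{S_1}\varOmega_{\sX_{S_1}})_{\mid Z}$, whereas you identify both sides with the constant span $W(Z)=W(S_1)$ using Theorem \ref{t3} and a Schubert-type closed condition --- a harmless variant that in fact makes explicit a step the paper leaves terse.
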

      
  \begin{proof} Fix $s\in Z(\C)$. By construction $\nabla_{\mid Z}$ and $\nabla_{\mid S_1}$ have the same connected algebraic monodromy group at $s$, namely $\hat G_1\subset Sp_{2g}$ (up to replacing $n$ by a multiple). It follows that $\sD_Z \varOmega_{\sX_Z} =   (\sD_{S_1} \varOmega_{\sX_{S_1}})_{\mid Z}$, whence $r(\sX_Z/ Z) = r(\sX_{S_1}/ S_1)$.
  
  \smallskip On the other hand, the inequality $r''(\sX_Z/ Z) \leq r''(\sX_{S_1}/ S_1)$ is obvious. For any natural integer $h < g$, let $\Delta_h$ be the closed subset of $T_{\mathfrak H_g^\vee}$ corresponding to quadratic forms in $S^2 {\rm Lie} {\sX }$ of rank $\leq  h\,$ (this is in fact a $Sp(\Lambda_\C)$-subvariety; $\Delta_{0}$ is the $0$-section). Then $r''(\sX_Z/ Z) $ (\resp $r''(\sX_{S_1}/ S_1) $) is the greatest integer $h$ such that $d\mu(\partial)$ in not contained in $\mu^\ast\Delta_{h-1}$.  In order to prove the inequality $r''(\sX_Z/ Z) \geq r''(\sX_{S_1}/ S_1)$, it thus suffices to show that if $T_{\tilde Z}$ is not contained in $\Delta_h$, neither is $T_{X_1^\vee}$, which follows from the fact that $T_{\tilde Z}$ is Zariski-dense in $T_{X_1^\vee}$ (\ref{t4}).
    \end{proof}

     \subsection{Case of a weakly special subvariety of $\sA_{g,n}$}

We now assume that $S$ is a weakly special subvariety of $\sA_{g,n}$, with associated group $G=G^{ad}$, and that there is a finite covering $\hat G $ of $G$ contained in $Sp(\Lambda_\Q)$. 

  \begin{thm}\label{t6} One has ${\rm Im}\, \theta  = \sD_S \varOmega_{\sX_S}/  \varOmega_{\sX_S}$, hence $r(\sX_{S}/ S) = r'(\sX_{S}/ S)$. 
      \end{thm}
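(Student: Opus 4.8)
The plan is to reduce the equality of sheaves to a single Lie-algebra computation in the fibre over one point and then to exploit the fact that $\sX_S$ has relative dimension $g$, i.e. that the underlying variation of Hodge structure has weight one. Write $S^{an} = \Gamma\backslash X$, and recall that on the universal cover $X$ the Gauss--Manin connection trivialises: $\sH$ pulls back to $W_\C\otimes_\C\sO_X$ with its trivial connection, where $W = \Lambda_\C^\vee$ is the standard representation of $\hat G\subseteq Sp(\Lambda_\Q)$, and $\varOmega_{\sX_S}$ pulls back to the Hodge subbundle whose fibre at $x$ is $F^1_x = W^{1,0}_x\subset W_\C$. Since $\hat G$ covers $G=G^{ad}$ and $\Gamma$ is Zariski-dense in $G$, a flat subbundle of $\sH$ is the same thing as a $G$-subrepresentation of $W$. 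As $\sD_S\varOmega_{\sX_S}$ is a $\sD_S$-direct factor of $\sH$ (Deligne semisimplicity, recalled above), it is flat, hence of the form $U\otimes_\C\sO_S$ for the smallest $G$-subrepresentation $U\subseteq W$ with $F^1_x\subseteq U_\C$ for every $x\in X$. The filtrations $F^1_x$ sweep out a single $G_\C$-orbit, being parametrised by the homogeneous compact dual $X^\vee$ with $X\subset X^\vee$ open; consequently $U_\C = \mathcal U(\mathfrak g_\C)\cdot W^{1,0}_{x_0}$ is the $\mathfrak g_\C$-submodule generated by $W^{1,0}_{x_0}$ at a fixed base point $x_0$.

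Next I would identify $\operatorname{Im}\theta$ in the same terms. Fix the Hodge decomposition $\mathfrak g_\C = \mathfrak g^{-1,1}\oplus\mathfrak g^{0,0}\oplus\mathfrak g^{1,-1}$ (of type $(-1,1)+(0,0)+(1,-1)$, as for any Shimura datum) and set $\mathfrak n := \mathfrak g^{-1,1}$. The Kodaira--Spencer map is the Higgs field of the variation, so at $x_0$ its image is exactly $\mathfrak n\cdot W^{1,0}_{x_0}\subseteq W^{0,1}_{x_0} = (\sH/\varOmega_{\sX_S})_{x_0}$; equivalently, the fibre of $\sD^{\leq 1}_S\varOmega_{\sX_S}$ at $x_0$ is $V := W^{1,0}_{x_0} + \mathfrak n\,W^{1,0}_{x_0}$. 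The whole statement thus comes down to proving $U_\C = V$, i.e. that $V$ is already $\mathfrak g_\C$-stable.

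This is where weight one enters. I would use the bracket relations forced by the Hodge grading of $\mathfrak g_\C$ --- namely $\mathfrak n$ is abelian (as $[\mathfrak g^{-1,1},\mathfrak g^{-1,1}]\subseteq\mathfrak g^{-2,2}=0$), $[\mathfrak g^{0,0},\mathfrak n]\subseteq\mathfrak n$ and $[\mathfrak g^{1,-1},\mathfrak n]\subseteq\mathfrak g^{0,0}$ --- together with the weight-one vanishings $\mathfrak g^{1,-1}W^{1,0}\subseteq W^{2,-1}=0$ and $\mathfrak n\,W^{0,1}\subseteq W^{-1,2}=0$. A direct check then gives $\mathfrak g^{0,0}V\subseteq V$, $\mathfrak n V\subseteq V$ (because $\mathfrak n(\mathfrak n W^{1,0})\subseteq\mathfrak n W^{0,1}=0$), and $\mathfrak g^{1,-1}V\subseteq V$ (because $\mathfrak g^{1,-1}\mathfrak n W^{1,0}\subseteq[\mathfrak g^{1,-1},\mathfrak n]W^{1,0}+\mathfrak n\,\mathfrak g^{1,-1}W^{1,0}\subseteq \mathfrak g^{0,0}W^{1,0}\subseteq W^{1,0}$). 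Hence $V$ is $\mathfrak g_\C$-stable, so $U_\C\subseteq V$; the reverse inclusion is clear, whence $\sD_S\varOmega_{\sX_S} = \sD^{\leq 1}_S\varOmega_{\sX_S} = U\otimes\sO_S$ and $\operatorname{Im}\theta = \sD_S\varOmega_{\sX_S}/\varOmega_{\sX_S}$, so that $r=r'$ on taking ranks. The main obstacle is not this final computation but the two identifications feeding it: that $\sD_S\varOmega_{\sX_S}$ is the flat bundle attached to the $G$-submodule generated by a single Hodge fibre (resting on Zariski-density of the monodromy, the semisimplicity giving flatness, and the homogeneity of $X^\vee$), and that the Higgs field realises $\operatorname{Im}\theta$ as $\mathfrak n\,W^{1,0}$; once these are in place, the weight-one property $\mathfrak g^{1,-1}W^{1,0}=0$ does all the work.
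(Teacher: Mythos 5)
Your argument is correct and is essentially the paper's own proof: both reduce to a fibrewise Lie-algebra computation showing that $V_x+\mathfrak n\,W^{1,0}_x$, the fibre of $\sD_S^{\leq 1}\varOmega_{\sX_S}$, is already $\mathfrak g_\C$-stable (hence equals the fibre of $\sD_S\varOmega_{\sX_S}$), using the type $(-1,1)+(0,0)+(1,-1)$ bracket relations of the Shimura datum together with Zariski-density of the monodromy. The only cosmetic difference is that you invoke the weight-one vanishing $\mathfrak g^{1,-1}W^{1,0}=0$ where the paper expresses the same fact as stability of $V_x$ under $\operatorname{Lie}P_x$.
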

  
\begin{proof} 
 Fix an arbitrary point $x\in X$ and set $s= j (x)\in S$. Then $X  = G(\R) /K_x  $, and $X^\vee = G_\C /P_x  $ can also be written $\hat G_\C / \hat P_x \subset \frak H_g^\vee$;   $\hat P_x$ stabilizes the lagrangian subspace $V_x := \varOmega_{\sX_s} \subset \Lambda^\vee_\C$. We write
  $$\frak g= {\rm Lie}\, G_\C = {\rm Lie}\, \hat G_\C ,\;\; \frak k_\C   =   {\rm Lie}\, K_{x,\C}.$$  
   The Hodge decomposition of $\frak g$ with respect to $ad\circ h_x$ takes the form $\frak u^+ \oplus \frak k_\C \oplus \frak u^-$, where $\frak u^+$, the Lie algebra of the unipotent radical of $P_x$, is of type $(-1,1), \; \frak k_\C$ is of type $(0,0)$, and $\frak u^-$ of type $(1,-1)$. One has $\frak u^+ \oplus \frak k_\C = {\rm Lie}\, \hat P_x$, $  \frak k_\C \oplus \frak u^-$ is the Lie algebra of an opposite parabolic group $P_x^-$, and $ \frak k_\C$ is the common (reductive) Levi factor (\cf also \cite[5]{Mo}). 
 
 Looking at the Hodge type, one finds that 
 \begin{equation}\label{eq18}[\frak u^+ ,\frak u^+ ]= [\frak u^- ,\frak u^- ]= 0, \; [\frak k_\C ,\frak u^+ ]\subset  \frak u^+ ,\;   [\frak k_\C ,\frak u^- ]\subset  \frak u^-, \;.    [\frak u^+, \frak u^-] \subset \frak k_\C. \end{equation}
By the Jacobi identity, it follows that
  \begin{equation}\label{eq19}[\frak k_\C, [  \frak u^+, \frak u^-]] \subset  [  \frak u^+, \frak u^-],  \end{equation}
\ie $[\frak u^+, \frak u^-] $ is a Lie ideal of $\frak k_\C$, hence a reductive Lie algebra.

  We may identify $T_xX^\vee = T_s S$ with $\frak u^-$. Note that $\Lambda^\vee_\C$ is a faithful representation of $\frak g$ and that $V_x$ is stable under ${\rm Lie}\, P_x = \frak u^+ +\frak k_\C $. Using the Hodge decomposition $V_x \oplus \bar V_x = \Lambda^\vee_\C \cong H_{dR}^1(\sX_s)$, we can write the elements of $\frak g$ as matrices in block form $\begin{pmatrix} R&S\\T&\iota(R) \end{pmatrix} $, with $R \in \frak k_\C, \;S\in \frak u^+, \, T\in  \frak u^-$ and $\iota$ is the involution exchanging $P_x$ and $P^-_x$. 
Identifying $\frak u^+$ with $  \begin{pmatrix} 0  &  \frak u^+ \\    0 &  0 \end{pmatrix}$ (\resp $\frak u^-$ with $  \begin{pmatrix} 0  &  0 \\    \frak u^- &  0 \end{pmatrix}$), one may write $ [\frak u^+, \frak u^-] = \begin{pmatrix}  \frak u^+\cdot\frak u^-  & 0\\  0 &\iota(\frak u^+\cdot\frak u^-) \end{pmatrix}$. Therefore $\frak u^+\cdot\frak u^-$ is a reductive Lie algebra acting on $V_x$. Accordingly,  $V_x$ decomposes as $V_0 \oplus V'$, where $V_0$ is the kernel of this action, and $(\frak u^-\cdot\frak u) V' = V'$.  

The identifications $\frak u^-= T_s S$ and $V_x = (\varOmega_{\sX_S})_s$ lead to $V_x \oplus \frak u^- V_x =  (\sD^{\leq 1}_S \varOmega_{\sX_S})_s$. 

\smallskip {\it Claim: $V_x \oplus \frak u^- V_x$ is the smallest $\frak g$-submodule of $\Lambda^\vee_\C$ containing $V_x$. Therefore, it is the fiber at $x$ of $\sD_S \varOmega_{\sX_S}$.}

The point is that $V_x + \frak u^- V_x = V_x + \frak u^- V' $ is stable under $\frak u^-, \frak u^-$ and $\frak k_\C$, which follows from \eqref{eq18} and from the fact that $V_x$ is stable under $\frak u^+ +\frak k_\C $.  
 \end{proof}

   \section{The case of maximal monodromy (subject to given polarization and endomorphisms)} 
   
   \subsection{Abelian schemes of PEM type} 
    
   \begin{defn}\label{pe} A principally polarized abelian scheme $ A/S$ is {\it of PE-monodromy type - or PEM type -} if its geometric generic fibre is simple and the connected algebraic monodromy is maximal with respect to the polarization $\psi$ and the endomorphisms. 
     \end{defn} 
     
      In other words, the Zariski-closure of the monodromy group at $s\in S$ is the maximal algebraic subgroup of $Sp(H^1(A_s), \psi_s)$ which commutes with the action of ${\rm{End}}\,A/S$ (this condition is independent of $s\in S(\mathbb C)$).
     
   \medskip  Let us make this more explicit. The endomorphism $\Q$-algebra $D := ({\rm{End}}\,  A/S)\otimes \Q$ is the same as the one of its generic fiber; since the latter is assumed to be geometrically simple, $D$ is also the endomorphism $\Q$-algebra of the geometric generic fibre. According to Albert's classification, its falls into one of the following types:
     
     I :  a totally real field $F= D$,
     
     II:  a totally indefinite quaternion algebra $D$ over a totally real field $F$,
     
     III: a totally indefinite quaternion algebra $D$ over a totally real field $F$,

     IV: a division algebra $D$ over a CM field $F$. 
     
  \noindent  Let $E\supset F$ be a maximal subfield of $D$, which we can take to be a CM field except for type I, and let $E^+$ be a maximal totally real subfield.
  For any embedding $\lambda: \,E^+ \inj \R$, let us order the embeddings $ \lambda_1, \lambda_2 : E \inj \C $ above $\lambda$ if $E\neq E^+$ (and set $\lambda_1 = \lambda$ otherwise). We identify $\lambda$ (\resp $\lambda_1$) with a homomorphism $E^+\otimes \C \to \C$ (\resp $E \otimes \C \to \C$). 
  Let us set 
    \begin{equation}\label{eq20} \sH_{\lambda} = \sH \otimes_{E^+\otimes \C, \lambda} \C \;\;  (\resp  \; \sH_{\lambda_1} = \sH \otimes_{E\otimes \C , \lambda_1} \C).  \end{equation} 
    By functoriality of the Gauss-Manin connection, these are direct factors of $\sH$ as $\sD_S$-modules, and $\sH_{\lambda} $ only depends (up to isomorphism) on the restriction $[\lambda]$ of $\lambda$ to $F^+$. 
       
  Then the maximal possible connected complex monodromy group at an arbitrary point $s\in S(\C)$ is of the form 
  $\Pi_{[\lambda]} \, G_{[\lambda]}$ where $G_{[\lambda]}$ and its representation on $\sH_{\lambda, s}$ are of the form 
  
  I:  $Sp(\sH_{\lambda, s}),\; St $,
  
  II:  $Sp(\sH_{\lambda_1, s}),\; St \oplus St$,
  
  III:  $SO(\sH_{\lambda_1, s}),\; St \oplus St$,
  
  IV: $SL(\sH_{\lambda_1, s}), \; St \oplus St^\ast$,
 
  \noindent  where $St$ denotes the standard representation, and $St^\ast$ its dual. Moreover, for types I, II, III, $\sH_{\lambda_1, s}$ is an even-dimensional space, \cf \eg \cite[5]{Ab}. 
  
   \begin{rem} If $A/S$ is endowed with a level $n$ structure, it is of PEM type if and only if the smallest {\it weakly special subvariety} of $\sA_{g,n}$ containing the image of $S$ is a special subvariety of PEL type in the sense of Shimura, \ie the image in $\sA_{g,n}$ of the moduli space for principally polarized abelian varieties $A$ such that $D\subset ({\rm{End}}\,A)\otimes \Q$, equipped with level $n$ structure \cite{S} (for $S = \Spec k$, $A$ is of PEM type if and only if $A$ has complex multiplication). 
   
  One could also define the related (but weaker) notion of abelian scheme $A/S$ of PE Hodge type, on replacing the monodromy group by the Mumford-Tate group, \cf \eg \cite{Ab}. If $A/S$ is endowed with a level $n$ structure, it is of PE Hodge type if and only if the smallest {\it special subvariety} of $\sA_{g,n}$ containing the image of $S$ is a special subvariety of PEL type in the sense of Shimura.
  %, one finds a definition of a principally polarized abelian variety $A$ over $k$ (with level $n$ structure) of PEL type, which amounts to say that the smallest special subvariety (as opposed to ``weakly special") of $\sA_{g,n}$ containing the modulus of $A$ is of PEL type.}. 
\end{rem}
     
     \subsubsection{}  Parallel to \eqref{eq20}, one has a decomposition
       \begin{equation}\label{21} \varOmega_{A, \lambda} = \varOmega_A \otimes_{E^+\otimes \C, \lambda} \C \;\;  (\resp  \; \varOmega_{A, \lambda_1} = \varOmega_A \otimes_{E\otimes \C , \lambda_1} \C).  \end{equation} 
       The sequence \eqref{eq1} 
    induces an exact sequence  
   \begin{equation}\label{eq22} 0 \to \varOmega_{A, \lambda_1}  \to  \sH_{\lambda_1}  \to   \sH_{\lambda_1} / \varOmega_{A, \lambda_1} \to 0.\end{equation} It turns out that for 
 types I, II, III,  $ \sH_{\lambda_1} / \varOmega_{A, \lambda_1} \cong  \varOmega^\vee_{A, \lambda_1}$.
        This is not the case for type $IV$, and the pair 
        \begin{equation}\label{eq23}  (r_{[\lambda]} = \dim \varOmega_{A, \lambda_1, s} , \; s_{[\lambda]} = \dim   \sH_{\lambda_1, s}/\varOmega_{A, \lambda_1, s} ) \end{equation} is an interesting invariant called the {\it Shimura type} (for type IV, the PEL families depend not only on $D$, the polarization and the level structure, but also on these pairs, when $[\lambda]$ runs among the real embeddings of $F^+$).  
       On the other hand,  $\varOmega_{A, \lambda_2} \cong \varOmega_{A, \lambda_1}$ for types  I, II, III, while $\varOmega_{A, \lambda_2} \cong    \sH_{\lambda_1} / \varOmega_{A, \lambda_1} $ for type IV.

   \subsubsection{}\label{exe}  By functoriality, the Kodaira-Spencer map induces a map 
    \begin{equation}\label{eq24}  \theta_{\partial, \lambda_1}: \; \varOmega_{A, \lambda_1}  \to     \sH_{\lambda_1} / \varOmega_{A, \lambda_1}.\end{equation}
    Therefore,      
     $ {\rm rk} \, \theta_{\partial, \lambda_1} \leq   \min (r_{[\lambda]} , s_{[\lambda]})$. In particular, if for some $[\lambda]$, $r_{[\lambda]} \neq s_{[\lambda]}$, then $r'< g$. 
     
     Let us consider for example the Shimura family of PEL type of abelian $3$-folds with multiplication by an imaginary quadratic field $E$ (type IV) and invariant $(r= 1, s=2)$ (it is non empty by \cite{S}). The base is a Shimura surface. Let $A/S$ be the restriction of this abelian scheme to a general curve of this surface. Then $r''= 1, \, r'= 2,\, r= g= 3$. 
     
     One gets examples with $r <  g$ when $r_{[\lambda]}\cdot s_{[\lambda]} = 0$ for some ${[\lambda]}$.

     \subsection{Abelian schemes of restricted PEM type}   
     
        \begin{defn}\label{rpe} A principally polarized abelian scheme $ A/S$ is of {\it restricted PEM type} if it is of PEM type and for any (equivalently, for all) $s\in S(\C)$, $(\varOmega_{A})_s$ is a free $E\otimes \C$-module.
     \end{defn} 
     
    In the latter condition, one could replace $E$ by $F$. It is automatic for types I, II, III. For type IV, it amounts to the equality $r_\lambda = s_\lambda$ for every $\lambda$; in that case, $\Omega_{\lambda_1} \cong \Omega_{\lambda_2}^\vee$.
   
     \begin{thm}\label{t7} In the restricted PEM case, one has $r''=r'=r=g$. 
      \end{thm}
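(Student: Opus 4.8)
The plan is to reduce to the weakly special case and there produce a single tangent direction whose Kodaira--Spencer form is nondegenerate. Since $r''\le r'\le r\le g$ always, it is enough to prove $r''=g$. By the invariance properties of §3 (Lemma \ref{l2}) the three ranks are unchanged under isogeny and dominant base change, and by Theorem \ref{t5} the rank $r''$ is moreover unchanged when one passes from $A/S$ to the universal abelian scheme $\sX_{S_1}/S_1$ over the smallest weakly special subvariety $S_1\subset\sA_{g,n}$ containing the image of $S$. Here $S_1$ is exactly the PEL Shimura variety attached to $(D,\psi,\text{level})$, and $\sX_{S_1}/S_1$ is again of restricted PEM type (the monodromy group $\hat G_1$ and the $E\otimes\C$-freeness of $\varOmega$ are preserved). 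Hence I may assume from the start that $S=S_1$ is weakly special, with adjoint group $G=G_1$ and monodromy maximal subject to the polarization and to the $D$-action.

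In this situation I take over the notation of the proof of Theorem \ref{t6}: I fix $x\in X$, set $s=j(x)$, and use the Hodge decomposition $\frak g=\frak u^+\oplus\frak k_\C\oplus\frak u^-$, the identification $T_sS=\frak u^-$, and the identification (the restriction to $\frak u^-$ of the universal isomorphism \eqref{eq13}) of the fibre at $s$ of the Kodaira--Spencer map \eqref{eq12} with a map $\frak u^-\to S^2\,{\rm Lie}\,\sX_s={\rm Hom_{sym}}\bigl((\varOmega_{\sX_S})_s,(\varOmega_{\sX_S})_s^\vee\bigr)$. By Lemma \ref{l2}(1), $r''=g$ is equivalent to the existence of a single $\partial\in\frak u^-$ for which $\theta_\partial\colon(\varOmega_{\sX_S})_s\to(\varOmega_{\sX_S})_s^\vee$ is an isomorphism, that is, a nondegenerate quadratic form in $\frak u^-$.

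I would establish this one Albert type at a time, after decomposing along the real places of $F^+$: the factorisations $\frak g=\bigoplus_{[\lambda]}\frak g_{[\lambda]}$ and $(\varOmega_{\sX_S})_s=\bigoplus_{[\lambda]}\varOmega_{A,\lambda,s}$ are compatible, and $\frak u^-_{[\lambda]}$ acts only on the $[\lambda]$-block, so a form in $\frak u^-$ is nondegenerate if and only if it is nondegenerate on each block; it therefore suffices to produce a nondegenerate element of $\frak u^-_{[\lambda]}$ for each $[\lambda]$ and sum over $[\lambda]$. For type I one has $\frak g_{[\lambda]}=\frak{sp}(\sH_{\lambda_1,s})$ and $\frak u^-_{[\lambda]}$ is the full symmetric space $S^2\varOmega_{A,\lambda_1,s}^\vee$ on the lagrangian $\varOmega_{A,\lambda_1,s}$, whose generic member is nondegenerate. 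For type IV the essential point---and the reason the PEM hypothesis must be restricted---is that the $E$-action forces every $\theta_\partial$ to pair the two eigenspaces $\varOmega_{A,\lambda_1,s}$ and $\varOmega_{A,\lambda_2,s}$ against each other: $\frak u^-_{[\lambda]}$ is identified with ${\rm Hom}(\varOmega_{A,\lambda_1,s},\varOmega_{A,\lambda_2,s}^\vee)$, and the off-diagonal form it defines is nondegenerate exactly when this homomorphism is invertible. Since the restricted hypothesis gives $r_{[\lambda]}=s_{[\lambda]}$, hence $\dim\varOmega_{A,\lambda_1,s}=\dim\varOmega_{A,\lambda_2,s}$, the generic such homomorphism is invertible; by contrast, when $r_{[\lambda]}\ne s_{[\lambda]}$ this pairing can never be perfect, which is precisely why $r'<g$ in the example of \ref{exe}. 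Types II and III are treated in the same spirit, the $E\otimes\C$-freeness of $\varOmega_A$ being automatic there and ensuring that the generic $D$-compatible symmetric form in $\frak u^-_{[\lambda]}$ is nondegenerate.

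Assembling a nondegenerate block for each $[\lambda]$ yields one $\partial\in\Gamma T_S$ with $\theta_\partial$ nondegenerate (nondegeneracy being an open condition, a tangent vector that works at the generic point extends to such a local field), whence $r''=g$ by Lemma \ref{l2}(1) and therefore $r''=r'=r=g$. The main obstacle is the type-by-type analysis of the preceding paragraph: one must pin down $\frak u^-_{[\lambda]}$ as an explicit space of $E$- or $D$-sesquilinear symmetric forms and verify that maximality of the monodromy makes it the \emph{full} such space, so that a generic---hence, under the restricted hypothesis, nondegenerate---element is available. The quaternionic types II and III, where the interplay between the canonical involution of $D$ and the symmetric pairing is most delicate, are the thorniest part.
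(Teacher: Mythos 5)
Your reduction (Lemma \ref{l2} and Theorem \ref{t5} to pass to the PEL Shimura case, then the Lie-theoretic setup from the proof of Theorem \ref{t6} with $T_sS=\frak u^-$, the splitting along the places $[\lambda]$, and the restricted hypothesis $r_{[\lambda]}=s_{[\lambda]}$ entering precisely for type IV) is exactly the paper's proof. The type-by-type verification you flag as the main remaining obstacle is dispatched there in one line: for $\frak g = \frak{sp}(2m)$, $\frak{so}(2m)$ or $\frak{sl}(2m)$, the space $\frak u^-$ consists of the lower-left $m\times m$ quadrants of elements of $\frak g$, and the lower-left quadrant of a generic element is invertible.
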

 
 \begin{proof} Thanks to lemma \ref{l2}  and theorem \ref{t5}, we are reduced to prove that $r'' = g$ for a Shimura family of PEL type, provided $r_\lambda= s_\lambda$ for every $\lambda$ in the type IV case.  This amounts in turn to showing that there exists $\partial$ such that $\theta_{\partial, \lambda_1}$ has maximal rank, equal to the rank of $\sH_{\lambda_1}$ which is twice the rank $m$ of $\varOmega_{A, \lambda_1}$.
 Let $\frak g$ be one of the Lie algebras $sp(2m), \, so(2m), \, sl(2m)$. In the notation of the proof of theorem \ref{t6}, The point is to show that $\frak u^-$ contains an invertible element. But $\frak u^+$  consists of lower left quadrants of elements of $\frak g$ viewed as a $2m$-$2m$-matrices; and it is clear that the lower left quadrant of a general element of $\frak g$ is an invertible $m$-$m$-matrix.
 \end{proof}
 
 \begin{rems} $i)$ One can be more precise and give an interpretation of the partial Kodaira-Spencer map at the level of $X^\vee$  as induced by isomorphisms 
    \begin{equation}\label{eq25} T_{G_{[\lambda]/P_{[\lambda]}}} \stackrel{\sim}{\to} S^2 \sL_{\lambda_1} \end{equation} for type I and II (the lower left quadrant of an element of $\frak g$ is symmetric), 
        \begin{equation}\label{eq26} T_{G_{[\lambda]/P_{[\lambda]}}} \stackrel{\sim}{\to}   \sL_{\lambda_1}^{\otimes 2} \end{equation} for type III and IV (the lower left quadrant of an element of $\frak g$ can be any $m$-$m$-matrix).

 \smallskip $ii)$ Of course one has $r= g$ whenever $\sH$ is an irreducible $\sD_S$-module. 
    
\smallskip  {\it Claim: If ${\rm{End}}_S A = \Z $ and $A/S$ is not isotrivial, then $\sH$ is an irreducible $\sD_S$-module.}

  Indeed, the conclusion can be reformulated as: the local system $R_1f^{an}_\ast \C$ is irreducible. Since we know that it is semisimple \cite[\S 4.2.6]{D}, this is also equivalent, by Schur's lemma, to ${\rm{End}}\, R_1f^{an}_\ast \C = \C $ and also to ${\rm{End}}\, R_1f^{an}_\ast \Z = \Z $. This equality then follows from the assumptions by the results of \cite[\S 4.4]{D}. More precisely, let $Z$ be as in \loccit the center of ${\rm{End}}\, R_1f^{an}_\ast \Q$; then $Z$ is contained in $({\rm{End}}_S A)\otimes \Q$ (\loccit, 4.4.7), hence equal to $\Q$, and by \loccit prop. 4.4.11 (under conditions $(a), (b), (c_1)$ or $(c_2)$), one deduces that ${\rm{End}}\, R_1f^{an}_\ast \Z = \Z $.
       
 \smallskip
 It would be interesting to determine whether $r''=g$ in this case, beyond the PEM case. \end{rems}

\subsection{Differentiating abelian integrals of the first kind with respect to a parameter}  From the above results about differentiating differential forms of the first kind with respect to parameters, it is possible to draw results about differentiating their integrals. 

        An {\it abelian integral of the first kind} on $A$ is a $\C$-linear\footnote{or $\sO(S)$-linear linear, this amounts to the same.} combination of abelian periods $  \int_\gamma \omega $, with $\omega \in \Gamma \varOmega_A$ and $\gamma$ in the period lattice on a universal covering $\tilde S$ of $S^{an}$.  
             
  \begin{thm}\label{t8} Assume that $A$ is an abelian scheme of restricted PEM type over an affine curve $S$.  Let $\partial$ be a non-zero derivation of $\sO(S)$. Then the derivative of a non zero abelian integral of the first kind is never an abelian integral of the first kind (on $A$).   \end{thm}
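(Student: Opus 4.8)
The plan is to pass to the generic point, writing $K=\C(S)$ for the function field equipped with the derivation induced by $\partial$, and to reinterpret the statement as a linear-independence property of periods inside the associated Picard–Vessiot field $L$. An abelian integral of the first kind is a pairing $\langle\gamma,\omega\rangle=\int_\gamma\omega$ of a flat multivalued cycle $\gamma$ with a section $\omega\in\Gamma\varOmega_A$, and since $\gamma$ is $\nabla$-horizontal one has $\partial\langle\gamma,\omega\rangle=\langle\gamma,\nabla_\partial\omega\rangle$; thus differentiating an integral of the first kind produces the period of $\nabla_\partial\omega$. Here theorem \ref{t7} enters decisively: on the curve $S$ the restricted PEM hypothesis gives $r''=g$, so $\theta_\partial$ is a generic isomorphism and I may complete a basis $\omega_1,\dots,\omega_g$ of $\varOmega_A$ by setting $\eta_j:=\nabla_\partial\omega_j$. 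This yields a generic decomposition $\sH=\varOmega_A\oplus\nabla_\partial\varOmega_A$ and a full solution matrix $Y\in GL_{2g}(L)$ whose first $g$ columns form the period matrix $\Omega=(\int_{\gamma_i}\omega_j)$ and whose last $g$ columns are its derivative $\partial\Omega$; in these coordinates the periods of the second kind are literally the derivatives $\partial\Omega_{ij}$ of those of the first kind.

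Next I would write a first-kind integral as $I=\sum_{i,j}f_{ij}\Omega_{ij}$ with $f_{ij}\in\sO(S)$ and differentiate: $\partial I=\sum(\partial f_{ij})\Omega_{ij}+\sum f_{ij}\,\partial\Omega_{ij}$. The first sum is again of the first kind, so $\partial I$ is of the first kind if and only if $\sum f_{ij}\,\partial\Omega_{ij}$ lies in the $\sO(S)$-span (equivalently the $K$-span) of the $\Omega_{kl}$. Because the Kodaira–Spencer matrix $T_\partial$ of \eqref{eq8} is invertible, $\sum f_{ij}\,\partial\Omega_{ij}$ is a genuinely nonzero combination of second-kind periods as soon as $I\neq0$. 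Hence the theorem is equivalent to the assertion that the second-kind entries of $Y$ are $K$-linearly independent modulo the first-kind ones: there is no relation $\sum f_{ij}\,\partial\Omega_{ij}=\sum h_{kl}\Omega_{kl}$ over $K$ with the $f_{ij}$ not all zero.

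The crux is therefore this independence of periods, which I would obtain from the differential Galois group. As recalled in \S\ref{GM}, the Gauss–Manin connection has regular singularities, so the differential Galois group of $(\sH,\nabla_\partial)$ over $K$ is the Zariski closure of the monodromy, which by the PEM hypothesis is the full group $G\subset Sp(V,\psi)$ of automorphisms of the solution space $V$ commuting with $D:=({\rm End}\,A)\otimes\Q$. Since $\Spec$ of the Picard–Vessiot ring is a $G$-torsor over $K$, the $K$-linear span of the entries of $Y$ has dimension $\dim_\C{\rm span}_\C(G)$, and ${\rm span}_\C(G)={\rm End}_D(V)$ by the double-centralizer theorem together with the PEM maximality. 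When ${\rm End}\,A=\Z$ we have $G=Sp_{2g}$ acting irreducibly on $V$, whence ${\rm span}_\C(G)={\rm End}(V)$ by Burnside; the $4g^2$ entries of $Y$ are then $K$-linearly independent, no relation of the above shape exists, and so $I=0$.

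I expect the main obstacle to be precisely the case ${\rm End}\,A\neq\Z$. There ${\rm span}_\C(G)={\rm End}_D(V)$ is a proper subalgebra, so the periods do satisfy nontrivial $K$-linear relations, and one must rule out that any of them couples a nonzero second-kind combination back to the first-kind periods. I would argue on the isotypic components $\sH_{\lambda_1}$ of \eqref{eq20}, where $G$ acts through $Sp$, $SO$ or $SL$ and the available relations are the block-scalar ones dictated by $D$. The role of the restricted PEM hypothesis is exactly to make the Hodge filtration compatible with this $D$-structure: the condition $r_\lambda=s_\lambda$ forces $\varOmega_{A,\lambda_1}$ and its complement to be dual, so that the first-kind/second-kind splitting is balanced on each component while $\theta_{\partial,\lambda_1}$ stays an isomorphism (proof of \ref{t7}). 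Checking that the $D$-relations consequently never involve the second-kind part, and that the symmetry $(\theta_\partial.\eta)\cdot\omega=(\theta_\partial.\omega)\cdot\eta$ of \ref{l2} is respected componentwise, is where the real work lies.
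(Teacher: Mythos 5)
Your proposal is correct and follows essentially the same route as the paper's proof: reduce to the $K$-linear independence of the period entries via Picard--Vessiot theory (the full solution matrix being the generic point of a torsor under the Zariski closure of the monodromy), use the invertibility of $T_\partial$ supplied by Theorem \ref{t7} to see that a nonzero first-kind integral acquires a genuinely second-kind contribution after differentiation, and handle the remaining Albert types by running the same argument on each piece $\sH_{\lambda_1}$ with $Sp_{2m}$, $SO_{2m}$ or $SL_{2m}$ in place of $Sp_{2g}$. The componentwise verification you flag as ``where the real work lies'' is exactly what the paper dispatches in its final sentence.
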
  
  
  \begin{proof} Let us first treat the case when the monodromy of $A/S$ is Zariski-dense in $Sp_{2g}$ for clarity. We may assume that $\varOmega_A$ is free. Then an abelian integral of the first kind is an $\sO(S)$-linear combination  $\sum_{ij} \lambda_{ij} \partial \int_{\gamma_i} \omega_j$ of entries of $\begin{pmatrix} \Omega_1 \\  \Omega_2 \end{pmatrix}$. By \eqref{eq8}, $\sum_{ij} \lambda_{ij} \partial \int_{\gamma_i} \omega_j = \sum_{ijk} \lambda_{ij} (\int_{\gamma_i} \omega_k  (R_\partial)_{kj} +  \int_{\gamma_i} \omega_k  (T_\partial)_{kj})$, \ie
 an $\sO(S)$-linear combination of entries of $\begin{pmatrix} \Omega_1 R_\partial + {\rm N}_1 T_\partial \\  \Omega_2  R_\partial + {\rm N}_2 T_\partial\end{pmatrix}$.  
 
  Since the monodromy of $A/S$ is Zariski-dense in $Sp_{2g}$, $Y = \begin{pmatrix} \Omega_1& {\rm N}_1 \\  \Omega_2 &{\rm N}_2\end{pmatrix}$ is the generic point of a $Sp_{2g, \C(S)}$-torsor, by differential Galois theory in the fuchsian case (Picard-Vessiot-Kolchin-Schlesinger). Since there is no linear relations between the entries of a generic element of $Sp_{2g}$, there is no $\C(S)$-linear relations between the entries of $\begin{pmatrix} \Omega_1& {\rm N}_1 \\  \Omega_2 &{\rm N}_2\end{pmatrix}$, or else between the entries of $\begin{pmatrix} \Omega_1& {\rm N}_1 T_{\partial} \\  \Omega_2 &{\rm N}_2 T_{\partial} \end{pmatrix}$ since $T_\partial$ is invertible (\ref{t7}). One concludes that $\sum_{ij} \lambda_{ij} \partial \int_{\gamma_i} \omega_j =  \sum \mu_{ij}   \int_{\gamma_i} \omega_j $ with $\lambda_{ij}, \mu_{ij}\in \sO(S)$ implies $\lambda_{ij}= \mu_{ij} =0$
  
  \smallskip The other cases are treated similarly, decomposing $\sH$ into pieces of rank $2m$ indexed by $\lambda$ as above, and replacing $Sp_{2g}$ by $Sp_{2m}, \, SO_{2m}$ or $SL_{2m}$ according to the type.
  \end{proof}

 \small{{\it Acknowledgements} - I thank D. Bertrand and B. Moonen for their careful reading.}

     \end{sloppypar}

    \end{document}